\titleformat{\section}{\large\bfseries\center}{\thesection}{1em}{}
\titleformat{\subsection}[runin]{\bfseries}{\thesubsection}{1em}{}
\newtheorem{teo}{Theorem}[section]
\newtheorem{lem}[teo]{Lemma}
\newtheorem{defi}[teo]{Definition}
\newtheorem{claim}{Claim}
\newtheorem{obs}{Observation}
\numberwithin{equation}{section} 
\newenvironment{proof}{\trivlist\item[\hskip \labelsep{\emph{\textbf{Proof}}}:]}{\nopagebreak \hfill $\Box$ \endtrivlist} 
\newenvironment{proofclaim}{\trivlist\item[\hskip \labelsep{\emph{\textbf{Proof of the Claim}}}:]}{\nopagebreak \hfill $\Box$ \endtrivlist}
\newcommand{\R}{\mathbb{R}}     
\def\S{\mathbb{S}}
\def\sig{\Sigma}
\def\s2{\S^2}	
\def\r3{\mathbb{R}^3}
\def\t{\theta}
\def\h5{\hspace{.5cm}}
\def\s{\mathcal{S}}
\def\p{\phi}
\def\g0{\gamma_{x_0}}
\begin{document}
\thispagestyle{empty}

\begin{center}

\renewcommand{\thefootnote}{\,}
{\fontsize{16}{19} \textbf{Surfaces of prescribed linear Weingarten curvature in $\mathbb{R}^3$} 
\footnote{
\hspace{.15cm}\emph{Mathematics Subject Classification:} 53A10, 53C42, 34C05, 34C40\\
\emph{Keywords}: Prescribed Weingarten curvature; linear Weingarten surface; rotational surface; nonlinear autonomous system; phase plane analysis\\
$^{\dagger,\ddagger}$Departamento de Ciencias, Centro Universitario de la Defensa de San Javier, E-30729 Santiago de la Ribera, Spain.\\
\emph{e-mail address:} $^\dagger$antonio.bueno@cud.upct.es, $^\ddagger$irene.ortiz@cud.upct.es}}

\vspace{0.5cm} { Antonio Bueno$^\dagger$, Irene Ortiz$^\ddagger$}\\
\end{center}
\vspace{.5cm}
\begin{quotation}
\emph{This paper is dedicated to the first author's mother, whose daily fight, strength and spirit of overcoming adversity, proves that there are way more difficult and important issues in life than a mathematical problem}
\end{quotation}
\vspace{.5cm}

\begin{abstract}
Given $a,b\in\R$ and $\Phi\in C^1(\S^2)$, we study immersed oriented surfaces $\sig$ in the Euclidean 3-space $\R^3$ whose mean curvature $H$ and Gauss curvature $K$ satisfy $2aH+bK=\Phi(N)$, where\\ $N:\sig\rightarrow\S^2$ is the Gauss map. This theory widely generalize some of paramount importance such as the ones constant mean and Gauss curvature surfaces, linear Weingarten surfaces and self-translating solitons of the mean curvature flow. Under mild assumptions on the prescribed function $\Phi$, we exhibit a classification result for rotational surfaces in the case that the underlying fully nonlinear PDE that governs these surfaces is elliptic or hyperbolic.
\end{abstract}

\section{Introduction}
In this paper we study the existence and classification of rotational surfaces $\sig$ in the Euclidean 3-space $\R^3$ whose mean curvature $H$ and Gauss curvature $K$ satisfy a linear relation that depends on their Gauss map $N:\sig\rightarrow\S^2$. Specifically, given $\Phi\in C^1(\S^2)$ and $a,b\in\R$, we are interested in surfaces satisfying
\begin{equation}\label{eq:defiPLW}
2aH+bK=\Phi(N).
\end{equation}
Any such $\sig$ will be called a \emph{prescribed linear Weingarten curvature surface}, or $\Phi$-\emph{surface} for short. Depending on $a,b$ and $\Phi$, numerous different examples of $\Phi$-surfaces have already appeared in the literature. Next, we highlight some among the most relevant:
\begin{itemize}
\item If $a=0,\Phi=c\in\R$, we have surfaces of constant Gauss curvature. In general, for an arbitrary $\Phi$, we have surfaces of prescribed Gauss curvature.
\item If $b=0,\Phi=c\in\R$, we have surfaces of constant mean curvature. In general, for an arbitrary $\Phi$, we have surfaces of prescribed mean curvature. An important particular case is when $\Phi(X)=\langle X,e_3\rangle,\ \forall X\in\S^2$, for which we recover self-translating solitons of the mean curvature flow. See e.g. \cite{CSS,HuSi,Ilm,MSHS} and references therein.
\item If $\Phi=c\in\R$ and $ab\neq0$, we have linear Weingarten surfaces. See e.g. \cite{Che,GMM,HaWi2,Hop,Lop,RoSa,SaTo1,SaTo2} and references therein.
\end{itemize}
Hereinafter, we will always assume that $a,b$ are not null in order to avoid these already studied cases.

In general, the study of surfaces in $\R^3$ described by a curvature function in terms of their Gauss map goes back, at least, to the Minkowski and Christoffel problems for ovaloids \cite{Chr,Min}. In the first one, the Gauss curvature is prescribed, while in the second one it is the sum of the curvature radii $1/k_1+1/k_2$. When the prescribed curvature function is the mean curvature, the existence and uniqueness of ovaloids was approached, among others, by Alexandrov, Pogorelov, Hartman and Wintner \cite{Ale, Pog,HaWi}, and more recently by B. Guan, P. Guan, Gálvez and Mira \cite{GuGu,GaMi1,GaMi2}. The main theorem in \cite{GaMi2} provides a tremendous general uniqueness result for immersed spheres in 3 manifolds, that widely generalizes Hopf's uniqueness theorem for constant mean curvature spheres. In our framework, it states that if there exists a sphere satisfying \eqref{eq:defiPLW}, and this equation is elliptic, it is unique among immersed spheres also satisfying \eqref{eq:defiPLW}. However, less is known about complete, non-compact $\Phi$-surfaces for a general function $\Phi\in C^1(\S^2)$, besides the paramount, aforementioned theories of constant mean and Gauss curvature, self-translating solitons of the mean curvature flow and linear Weingarten surfaces. In this fashion, the first author jointly with Gálvez and Mira started to develop the \emph{global theory of surfaces of prescribed mean curvature}, taking as starting point the theory of positive constant mean curvature surfaces and self-translating solitons of the mean curvature flow \cite{BGM1,BGM2}. Also, the authors recently addressed the study of surfaces in $\R^3$ of prescribed Gauss curvature, starting by describing the rotational examples \cite{BuOr}. 

Our goal in this paper is twofold. First, we pursue a complete, self-contained classification of rotational linear Weingarten surfaces, i.e. when $\Phi=c\in\R$ in Eqn. \eqref{eq:defiPLW}. Different authors have approached this problem, distinguishing between the character of the underlying PDE that locally governs Eqn. \eqref{eq:defiPLW} as elliptic \cite{RoSa,SaTo1} and hyperbolic \cite{Lop}, but either some examples were missed or the classification result had some mistake, respectively. In this paper, we cover these gaps and exhibit the explicit behavior of the profile curve of every rotational linear Weingarten surface. For a description of the parabolic case, we refer the reader to \cite{BuLo}.

Second, we aim to lay the groundwork of a global theory of complete, non-compact $\Phi$-surfaces, taking as main motivation the theories of constant mean and Gauss curvature surfaces, and also the one of linear Weingarten surfaces. In virtue of Eqn. \eqref{eq:defiPLW}, the following are two trivial properties of $\Phi$-surfaces: (1) $\Phi$-surfaces are invariant by Euclidean translations; and (2) any symmetry of $\Phi$ in $\S^2$ induces a linear isometry of $\R^3$ that preserves the class of $\Phi$-surfaces, i.e. that send $\Phi$-surfaces into $\Phi$-surfaces. Another fundamental property of $\Phi$-surfaces is that they can be locally expressed as the graph of a function $u(x,y)$ that is a solution of the fully nonlinear PDE
\begin{equation}\label{eq:PDE}
a\,\mathrm{div}\left(\frac{Du}{\sqrt{1+|Du|^2}}\right)+b\frac{det D^2u}{(1+|Du|^2)^2}=\Phi\left(\frac{(-Du,1)}{\sqrt{1+|Du|^2}}\right),
\end{equation}
where $\mathrm{div},D$ and $D^2$ are, respectively, the divergence, gradient and hessian operators on $\R^3$. Some difficulties that arise when approaching the study of \eqref{eq:PDE} are: (1) for a general non-constant function $\Phi$, this equation does not have a variational structure; and (2) due to the arbitrariness of $\Phi$ we need to take into account the loss of symmetries and isotropy of the resulting equation We remark that for the particular case $\Phi=c$, a linear Weingarten surface $\sig$ is a critical point of a functional expressed as a linear combination of the area of $\sig$, the volume enclosed by $\sig$, and the total mean curvature of $\sig$. 

The line of inquiry in the present paper is the existence and classification of rotational $\Phi$-surfaces in the case that the prescribed function $\Phi$ is \emph{rotationally symmetric}, i.e. $\Phi(N)=\phi(\langle N,e_3\rangle)$, where $\phi\in C^1([-1,1])$. Under this symmetry condition, Eqn. \eqref{eq:defiPLW} expresses as
\begin{equation}\label{eq:defiPLWrotsim}
2aH+bK=\phi(\langle N,e_3\rangle).
\end{equation}
The quantity $\langle N,e_3\rangle$ that measures the height of $N$ in $\S^2$ with respect to the $e_3$-direction is the so-called \emph{angle function} of the $\Phi$-surface. Under this hypothesis, rotations in $\R^3$ around a vertical line are ambient isometries that preserve Eqn. \eqref{eq:defiPLWrotsim}, hence the notion of rotational $\Phi$-surface is well-defined.


A classification result for \emph{all} $\Phi$-surfaces with no further hypotheses on $\Phi\in C^1(\S^2)$ seems hopeless in such generality. When $b=0$ in \eqref{eq:defiPLW}, rotational surfaces of prescribed mean curvature $H=\mathcal{H}(N)$ were studied \cite{BGM2}. Under the hypotheses on $\mathcal{H}$ of being non-vanishing and even, the rotational $\mathcal{H}$-surfaces are open pieces of spheres, cylinders, unduloids and nodoids. This \emph{Delaunay pattern} of $\mathcal{H}$-surfaces has been extended by the first author to more general ambient spaces: the so-called $\mathbb{E}(\kappa,\tau)$ spaces, see \cite{Bue1,Bue2,Bue3} and references therein. In the same fashion, taking $a=0$ in \eqref{eq:defiPLW}, the study of rotational surfaces of prescribed Gauss curvature $K=\mathfrak{K}(N)$ was addressed by the authors in \cite{BuOr}. Again, the hypotheses $\mathfrak{K}\neq0$ and even revealed as sufficient to generalize the classification of rotational surfaces of constant Gauss curvature. Inspired by the well structure induced by these conditions, throughout this paper we will assume that $\Phi$ is a rotationally symmetric, non-vanishing, even function on $\S^2$. In these conditions, reflections about horizontal planes preserve Eqn. \eqref{eq:defiPLWrotsim} and hence send $\Phi$-surfaces into $\Phi$-surfaces.

Besides the hypotheses on $\Phi$ of being non-vanishing and even, it is also necessary to distinguish between the intrinsic character of the PDE \eqref{eq:PDE}, namely elliptic, hyperbolic or parabolic; this character will determine the global structure of $\Phi$-surfaces, similarly to what happens for linear Weingarten surfaces. As revealed in Section \ref{sec22}, for a general $\Phi$-surface, the character of the PDE \eqref{eq:PDE} is determined by the sign of its discriminant $a^2+b\Phi$. We furthermore emphasize that the parabolic case, namely when $a^2+b\Phi=0$, leads to $\Phi$ being a constant function and has been solved in \cite{BuLo}. Consequently, in this paper, we study in detail the classification of elliptic and hyperbolic rotational $\Phi$-surfaces. 


We next explain the organization of the paper, and highlight some of the main results. In Section \ref{sec2}, we analyze in detail the nonlinear autonomous system fulfilled by the profile curve of a rotational $\Phi$-surface. In Section \ref{sec21}, we carry on a qualitative study of its solutions that allows us to deduce the behavior and the geometric properties of such profile curves, and eventually to classify the rotational examples. In Section \ref{sec22} we deduce the local character of the PDE \eqref{eq:PDE} as elliptic, hyperbolic and parabolic, in terms of $a,b$ and $\Phi$. In Section \ref{sec23}, we briefly discuss the existence of radial solutions of Eqn. \eqref{eq:PDE} intersecting orthogonally the axis of rotation. This equation for such initial data is singular, hence standard theory cannot be invoked in order to ensure its existence. We overcome these difficulties in virtue of the work of the first author and López \cite{BuLo}, and deduce its main consequence on the phase plane.

In Section \ref{sec3}, we classify rotational $\Phi$-surfaces of elliptic type. We distinguish two possible cases on the constants $a,b$ in Sections \ref{sec31} and \ref{sec32}, and prove Thms. \ref{th:clasificacionespeciales} and \ref{th:clasificacionelipticas2}, respectively. We deduce the existence of 10 distinct examples of rotational $\Phi$-surfaces of elliptic type: some resemble to constant mean curvature surfaces, while others to positive constant Gauss curvature surfaces.

Finally, in Section \ref{sec4}, we classify all the rotational $\Phi$-surfaces of hyperbolic type in Thm. \ref{th:clasificacionhiperbolicas}. As a particular case, we correct a mistake in the classification result in \cite{Lop} of rotational linear Weingarten surfaces of hyperbolic type.

\section{Differential equations of rotational $\Phi$-surfaces}\label{sec2}

Let $\Phi\in C^1(\S^2)$ be rotationally symmetric around the $e_3$-direction, i.e. there exists a 1-dimensional function $\p\in C^1([-1,1])$ such that
\begin{equation}\label{condicionrotsim}
\Phi(X)=\phi(\langle X,e_3\rangle),\hspace{.5cm} \forall X\in\S^2.
\end{equation}
A surface satisfying \eqref{eq:defiPLW} for such $\Phi$, or equivalently \eqref{eq:defiPLWrotsim} for $\phi$, and $a,b\in\R$ is called a $\Phi$-surface. Our first step is to deduce the differential equations fulfilled by the coordinates of the profile curve of a rotational $\Phi$-surface. 

Let $\sig$ be a $\Phi$-surface described as the rotation of an arc-length parametrized, planar curve $\alpha(s)=(x(s),0,z(s)),\ s\in I\subset\R$, around the vertical axis passing through the origin. From the arc-length condition we deduce the existence of a function $\theta(s)$ defined by $x'(s)=\cos\t(s),\ z'(s)=\sin\t(s)$. Hereinafter and unless explicitly said and needed, we omit the parameter $s$ for saving notation. A straightforward computation yields that the principal curvatures of $\sig$ are
\begin{equation}\label{eq:curvprin}
\kappa_1=\theta',\hspace{.5cm} \kappa_2=\frac{\sin\theta}{x},
\end{equation}
and so the Gauss and mean curvature are
$$
K=\frac{\theta'\sin\theta}{x},\hspace{.5cm} 2H=\theta'+\frac{\sin\theta}{x}.
$$
Moreover, the angle function, $\langle N,e_3\rangle$, is $x'=\cos\t$. Bearing these discussions in mind, Eqn. \eqref{eq:defiPLWrotsim} transforms into the differential equation
$$
a\left(\theta'+\frac{\sin\theta}{x}\right)+b\frac{\theta'\sin\theta}{x}=\p(\cos\theta).
$$
Solving $\theta'$ we arrive to
$$
\theta'=\frac{x\p(\cos\theta)-a\sin\theta}{ax+b\sin\theta},
$$
wherever $ax+b\sin\t\neq0$. Conversely, assume that $x(s),z(s)$ and $\t(s)$ are solutions of
\begin{equation}\label{eq:sistemadiferencial}
\left\lbrace\begin{array}{l}
\vspace{.25cm} x'=\cos\theta\\
\vspace{.25cm} z'=\sin\theta\\
\theta'=\displaystyle{\frac{x\p(\cos\theta)-a\sin\theta}{ax+b\sin\theta}},
\end{array}\right.
\end{equation}
$ax+b\sin\t\neq0$, for some $\phi\in C^1([-1,1])$. Then, the surface defined by rotating $\alpha(s)=(x(s),0,z(s))$ around the $z$-axis is a $\Phi$-surface for the function $\Phi(X)=\p(\langle X,e_3\rangle),\ \forall X\in\S^2$.

Recall that we can reduce system \eqref{eq:sistemadiferencial} to the first and third equations, since the second one is defined by the other two. Consequently, we project the vector field $(x,z,\t)$ onto the $(x,\t)$-plane, obtaining the first order, nonlinear autonomous system
\begin{equation}\label{eq:planofases}
\left(\begin{array}{c}
x\\
\theta
\end{array}\right)'=
\left(\begin{array}{c}
\vspace{.1cm} \cos\theta(s)\\
\displaystyle{\frac{x\p(\cos\theta)-a\sin\theta}{ax+b\sin\theta}}
\end{array}\right).
\end{equation}

\subsection{The phase plane.}\label{sec21} The phase plane is defined as the set $\Theta:=(0,\infty)\times(0,2\pi)-\s$, where
$$
\s:=\left\lbrace(x,\t):\ x>0,\ \t\in(0,2\pi),\ x=\s(\theta):=\frac{-b\sin\t}{a}\right\rbrace.
$$
The orbits $\gamma(s)=(x(s),\theta(s))$ are the solutions of system \eqref{eq:planofases}. Next, we summarize some of the main properties of the phase plane that follow from its definition:
\begin{enumerate}
\item We define $\Theta_1=\Theta\cap\{\theta<\pi\}$ and $\Theta_2=\Theta\cap\{\t\in(\pi,2\pi)\}$. Since $x>0$, $\s$ is a compact arc in $\Theta_1$ (resp. in $\Theta_2$) joining the points $(0,0)$ and $(0,\pi)$ (resp. $(0,\pi)$ and $(0,2\pi)$) if and only if $ab<0$ (resp. if $ab>0$).
\item The points lying on $\s$, are singular for system \eqref{eq:planofases}. Therefore, an orbit cannot have a finite endpoint at $\s$, although it can converge to a point in $\s$ at finite time. Any such limit point in $\s$ corresponds to a circle of singular points of the corresponding $\Phi$-surface.
\item By solving $\theta'=0$ in \eqref{eq:planofases}, we deduce that the curve $\Gamma:=\Theta\cap\{x=\Gamma(\t)\}$, where
$$
x=\Gamma(\t):=\frac{a\sin\t}{\p(\cos\t)},\h5 \t\in(0,2\pi),\ \phi(\cos\t)\neq0,
$$
corresponds to points of the profile curve $\alpha$ with vanishing curvature. This curve exists in $\Theta$ wherever $a\sin\t\p(\cos\t)>0$.
\item If $\phi(0)\neq0$, the point
$$
e_0=\left(\frac{a}{\phi(0)},\frac{\pi}{2}\right),\ \mathrm{if}\ a\phi(0)>0,\hspace{.5cm} \left(\frac{-a}{\phi(0)},\frac{3\pi}{2}\right),\ \mathrm{if}\ a\phi(0)<0,
$$
called the \emph{equilibrium}, is a constant orbit that trivially solves \eqref{eq:planofases}. The $\Phi$-surface generated by this orbit is the circular flat cylinder of vertical rulings and radius $|a/\phi(0)|$.
\item The lines $\theta=\pi/2,3\pi/2$ and the curves $\Gamma,\s$ divide $\Theta$ into connected components where the coordinate functions of an orbit are monotonous.
\end{enumerate}

Throughout this paper we will always assume that the prescribed function $\Phi\in C^1(\S^2)$ is positive and \emph{even}, that is $\Phi(X)=\Phi(-X)$, for every $X\in\S^2$. In particular, $\phi$ given by \eqref{condicionrotsim} in terms of $\Phi$ satisfies $\phi(y)=\phi(-y)$ for every $y\in[-1,1]$. The even condition has the following geometric property on the phase plane: if $(x(s),\t(s))$ is a solution of \eqref{eq:planofases}, then $(x(-s),\pi-\t(-s))$ is also a solution of \eqref{eq:planofases} whenever $\t\in(0,\pi)$. When $\t\in(\pi,2\pi)$, it follows that $(x(-s),3\pi-\t(-s))$ is a solution to \eqref{eq:planofases}. Geometrically, the subsets $\Theta_1$ and $\Theta_2$ of $\Theta$ are symmetric with respect to the lines $\t=\pi/2,\ \t=3\pi/2$, respectively.


\subsection{The local character of the PDE}\label{sec22}

Locally, a surface satisfying \eqref{eq:defiPLW} can be expressed as the graph of a function $u(x,y) $ that is a solution of the PDE \eqref{eq:PDE}, which can be written as the solution of a function $\mathfrak{F}(u_x,u_y,u_{xx},u_{yy},u_{xy})=0$. Note that the variable $u$ does not appear, since \eqref{eq:PDE} is invariant under Euclidean translations. A straightforward computation shows that the discriminant of this PDE is
$$
\mathfrak{F}_{u_{xx}}\mathfrak{F}_{u_{yy}}-\frac{1}{4}\mathfrak{F}_{u_{xy}}^2=(1+u_x^2+u_y^2)^2(a^2+b\Phi).
$$
The character of this PDE inspires the following definition.
\begin{defi}\label{tipoEDP}
Let be $a,b\in\R$ and $\Phi\in C^1(\S^2)$ related by \eqref{eq:defiPLW}, and $\sig$ a $\Phi$-surface.
\begin{itemize}
\item If $a^2+b\Phi>0$, $\sig$ is of \emph{elliptic} type.
\item If $a^2+b\Phi=0$, $\sig$ is of \emph{parabolic} type.
\item If $a^2+b\Phi<0$, $\sig$ is of \emph{hyperbolic} type.
\end{itemize}
\end{defi}
For instance, if $\sig$ is a $\Phi$-surface of elliptic type then the maximum principle for fully nonlinear PDE's applies. Consequently, classic techniques originally developed for surfaces of constant mean curvature and positive constant Gauss curvature can be adapted to this case, see e.g. \cite{RoSa}. The parabolic case leads to $\Phi$ being a constant function, and an explicit description of the rotational parabolic linear Weingarten surfaces can be found in \cite{BuLo}.

\subsection{Existence of radial solutions}\label{sec23} In this section we discuss the existence of radial solutions, $u=u(r)$, of Eqn. \eqref{eq:PDE} that intersect orthogonally the axis of rotation. This is equivalent to establish a classical solution of
\begin{equation}\label{rot}
\left\{\begin{array}{ll}a\left(\dfrac{u''}{(1+u'^2)^{3/2}}+\dfrac{u'}{r\sqrt{1+u'^2}}\right)+b\dfrac{u''u'}{r(1+u'^2)^2}=\phi\left(\dfrac{1}{\sqrt{1+u'^2}}\right), & \mbox{in $(0,\delta)$}\\
u'(0)=0.
\end{array}\right.
\end{equation} 
Any solution of this initial value problem generates an orbit of system \eqref{eq:planofases} having the point $(0,0)$ as endpoint. However, Eqn. \eqref{rot} is singular at $r=0$, as well as Eqn. \eqref{eq:planofases} at $(0,0)$, hence standard theory cannot be invoked in order to ensure the existence of such an orbit. Nevertheless, by the work of the first author and López, for every $a,b,\Phi$ such that the elliptic relation $a^2+b\Phi>0$ holds, there exists $\delta>0$ small enough and $u\in C^2([0,\delta]),\ u>0,$ that solves \eqref{rot}. We give a hint of the method used to prove it, see Th. 4.1 in \cite{BuLo} for further details.

Assume that $u=u(r)$ is a radial solution of \eqref{eq:PDE}. After multiplying by $r$ and dividing by $a$, the equation in \eqref{rot} transforms into
$$
\left(\frac{ru'}{\sqrt{1+u'^2}}\right)'+\frac{b}{2a}\left(\frac{u'^2}{1+u'^2}\right)'=\frac{r}{a}\phi\left(\frac{1}{\sqrt{1+u'^2}}\right).
$$
If we define the functions $f,g\colon\R \to\R$ by  
$$
f(y)=\frac{y}{\sqrt{1+y^2}},
\quad g(y)=\frac{1}{a}\phi\left(\frac{1}{\sqrt{1+y^2}}\right),
$$
and after integration, we get
$$ 
rf(u')+\frac{b}{2a}f(u')^2=\int_0^r tg(u'(t))\, dt.
$$
After solving $f(u')$ and eventually $u'$ by taking $f^{-1}$ and integrating, we define the operator
$$
(\mathsf{T}u)(r)=\int_0^rf^{-1}\left(\frac{2a}{b}\left(-s+\sqrt{s^2+\frac{b}{a}\int_0^stg(u'(t))\, dt}\right)\right)\, ds.
$$
Note that $u(r)$ is a solution of \eqref{rot} if and only if it is a fixed point of $\mathsf{T}$. It was proved in Th. 4.1 in \cite{BuLo} that $\mathsf{T}$ is a contraction in the space $C^1([0,\delta])$ for $\delta>0$ small enough, hence the existence of a fixed point $u\in C^1([0,\delta])\cap C^2((0,\delta])$ that solves Eqn. \eqref{rot} is ensured. The fact that $u(r)$ has $C^2$-regularity at $r=0$ follows by taking limits in \eqref{rot} and applying the L'Hôpital rule.

By the even condition on $\Phi$, we can reflect $u$ with respect a horizontal plane and obtain a downwards oriented graph intersecting orthogonally the axis of rotation. In particular, we derive the following consequence on the phase plane.

\begin{lem}\label{orbitasmasmenos}
Let be $a,b\in\R$ and $\Phi\in C^1(\S^2)$ such that $a^2+b\Phi>0$. Then, there exists an orbit $\gamma_+$ (resp. $\gamma_-$) in $\Theta$ having the point $(0,0)$ (resp. $(0,\pi)$) as endpoint. 
\end{lem}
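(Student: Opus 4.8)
The plan is to turn the existence statement for radial solutions of \eqref{rot}, quoted above from Th. 4.1 in \cite{BuLo}, into a statement about orbits of the autonomous system \eqref{eq:planofases}. First I would fix $a,b,\Phi$ with $a^2+b\Phi>0$, and observe that the elliptic inequality holds in particular at the north pole $e_3$, i.e. $a^2+b\phi(1)>0$; this is the hypothesis under which Th. 4.1 in \cite{BuLo} provides $\delta>0$ and $u\in C^2([0,\delta])$, $u>0$, solving the initial value problem \eqref{rot} with $u'(0)=0$. The graph of $u$ over the disk of radius $\delta$ is then a rotational $\Phi$-surface that meets the axis of rotation orthogonally at the point $(0,0,u(0))$.

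Next I would translate this graph into the profile-curve picture. Reparametrizing the profile curve $r\mapsto (r,0,u(r))$ by arc length $s$, so that $x(s)$ is the distance to the axis and $\theta(s)$ is defined by $x'=\cos\theta$, $z'=\sin\theta$, the condition $u'(0)=0$ together with $u>0$ forces $x(0)=0$ and $\theta(0)=0$ (the upward unit normal at the vertex is $e_3$, so the angle function $\cos\theta$ equals $1$ there). As $s$ increases from $0$, $x(s)>0$ and $\theta(s)\in(0,\pi)$, so the curve $(x(s),\theta(s))$ lies in $\Theta$ for small $s>0$; by the computation in Section \ref{sec2}, since $u$ solves \eqref{rot}, the pair $(x(s),\theta(s))$ solves the first and third equations of \eqref{eq:sistemadiferencial}, hence is an orbit $\gamma_+$ of \eqref{eq:planofases}. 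By construction this orbit has $(0,0)$ as a (limit) endpoint as $s\to 0^+$. This gives $\gamma_+$.

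For $\gamma_-$ I would invoke the even hypothesis on $\Phi$, exactly as indicated in the paragraph preceding the lemma: reflecting $u$ across a horizontal plane yields a downward-oriented graph that still solves \eqref{eq:PDE} (because $\phi$ is even, so $\phi(\langle N,e_3\rangle)=\phi(-\langle N,e_3\rangle)$ and the left-hand side of \eqref{eq:defiPLWrotsim} changes sign consistently under the reflection) and still meets the axis orthogonally at its vertex. In the profile-curve picture this reflected surface has the unit normal pointing in the $-e_3$ direction at the vertex, so the angle function is $-1$ there, i.e. $\theta(0)=\pi$; running the same translation as above, the pair $(x(s),\theta(s))$ is now an orbit of \eqref{eq:planofases} with $\theta\in(\pi,2\pi)$ small near the vertex (or, equivalently, one can apply the symmetry $(x(s),\theta(s))\mapsto(x(-s),\pi-\theta(-s))$ of \eqref{eq:planofases} noted in Section \ref{sec21} to $\gamma_+$). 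Either way we obtain an orbit $\gamma_-$ with endpoint $(0,\pi)$, completing the proof.

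The only genuinely delicate point is the passage at the vertex: Eqn. \eqref{rot} and the system \eqref{eq:planofases} are both singular at $r=0$ (resp. at $x=0$), so one must be careful that the $C^2$ solution $u$ produced by the contraction argument of \cite{BuLo} really does glue to a well-defined orbit arc of \eqref{eq:planofases} whose closure contains $(0,0)$, rather than merely accumulating there in an uncontrolled way. This is handled by the $C^2$-regularity of $u$ at $0$ (obtained via L'Hôpital in \cite{BuLo}), which makes the arc-length reparametrization and the identification $\theta(0)=0$ legitimate; I would state this carefully but not reprove the regularity, citing Th. 4.1 in \cite{BuLo}. Everything else is a routine change of variables between the graph formulation and the phase-plane formulation already set up in Section \ref{sec2}.
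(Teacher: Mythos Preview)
Your proposal is correct and follows essentially the same approach as the paper: the lemma is stated immediately after the discussion in Section~\ref{sec23}, and the paper's ``proof'' is precisely that discussion --- invoke Th.~4.1 in \cite{BuLo} to obtain the $C^2$ radial solution $u$ of \eqref{rot}, read off the orbit $\gamma_+$ with endpoint $(0,0)$ from the associated arc-length profile curve, and then use the even condition on $\Phi$ (via horizontal reflection, or equivalently the phase-plane symmetry $(x(s),\theta(s))\mapsto(x(-s),\pi-\theta(-s))$) to produce $\gamma_-$ with endpoint $(0,\pi)$. Your write-up simply makes explicit the change of variables and the regularity issue at the vertex that the paper leaves implicit; the one minor slip is the phrase ``$\theta\in(\pi,2\pi)$'' for $\gamma_-$, since the symmetry of $\Theta_1$ you yourself invoke places $\gamma_-$ in $\Theta_1$, with $\theta$ just below $\pi$ --- but this does not affect the argument.
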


\section{The elliptic case}\label{sec3}
In this section, we classify rotational $\Phi$-surfaces of elliptic type in the sense of Def. \ref{tipoEDP}, i.e. the constants $a,b$ and the prescribed function $\Phi\in C^1(\S^2)$ related by Eqn. \eqref{eq:defiPLW} satisfy $a^2+b\Phi>0$. For the particular case $\Phi=c\in\R$, these surfaces appeared as a particular case of the more general \emph{special surfaces}, see the pioneer groundwork \cite{RoSa,SaTo1,SaTo2}. Although rotational linear Weingarten surfaces of elliptic type are special surfaces, hence were partially described in these works, some examples were missing.

In this section we provide a classification result for rotational $\Phi$-surfaces when $\Phi$ is non-vanishing and even. In particular, we fully classify rotational linear Weingarten surfaces of elliptic type, settling this problem. As a consequence of our investigations, we obtain a total of 10 distinct examples of such rotational $\Phi$-surfaces of elliptic type, described in the following table depending on $a,b$.


\begin{center}
\begin{tabular}{ |p{5cm}|p{5cm}| }
\hline
\multicolumn{2}{|c|}{\textbf{Rotational surfaces of elliptic type}} \\
\hline
\hfil Case $a>0,b>0$ & \hfil Case $a>0,b<0$  \\ 
\hline
\rowcolor{cyan}
\multicolumn{2}{|c|}{\textbf{Complete}} \\
\hline
\multicolumn{2}{|c|}{Cylinder} \\
\multicolumn{2}{|c|}{Sphere} \\
\multicolumn{2}{|c|}{Unduloids} \\
\multicolumn{2}{|c|}{Nodoids} \\
\hline
\rowcolor{cyan}
\multicolumn{2}{|c|}{\textbf{Non-complete}} \\
\hline
\multicolumn{2}{|c|}{$K>0$; two cusp points; strictly monotonous height} \\
\multicolumn{2}{|c|}{$K>0$; annuli; strictly monotonous height} \\
\hline
$K>0$; annuli; non-monotonous height & $K>0$; two cusp points; non-monotonous height\\
\hline
 $K$ changing sign; two cusp points; strictly monotonous height & $K$ changing sign; annuli; strictly monotonous height\\
\hline
\end{tabular}
\end{center}
Let us briefly explain the information in this table. First, we show six common examples for $b>0$ and $b<0$: four complete and two non-complete. Recall that the complete ones have the same properties as the \emph{Delaunay surfaces} of positive, constant mean curvature, and also for the case $b=0$ and an arbitrary positive and even $\Phi$ \cite{BGM2}. Next, we describe the corresponding examples according to $b>0$ or $b<0$. We exhibit if the Gauss curvature of the surface has constant sign or changes it. Then, we describe the topology: it can be either simply connected with two singular cusp points at the axis of rotation, or be an annulus whose boundary are circles of singular points. Last, we distinguish on the height function being strictly monotonous or attaining local extrema. 

At first sight, one may guess that the cases $a<0,b>0$ and $a<0,b<0$ are missing in this study. Nevertheless, we show that for fixed $b$, the cases $a>0$ and $a<0$ are the same. Indeed, since $\Phi$ is even in $\S^2$ and changing the Gauss map $N$ by $-N$ changes the sign of the mean curvature but remains invariant the Gauss curvature, the constant $a$ can be changed of sign in Eqn. \eqref{eq:defiPLW} after an adequate change of the orientation. Hence, we restrict ourselves to the cases $a>0,b>0$ and $a>0,b<0$. Remind that the only hypothesis on $\Phi$ are to be non-vanishing and even. We show next that we can further assume the prescribed function $\Phi$ to be positive. Indeed, if it is negative we consider the equivalent equation
$$
-2aH-bK=-\Phi(N),
$$
where $-\Phi$ is a positive function. If $-a>0,-b>0$ or $-a>0,-b<0$ we are in the aforementioned cases of study. If this is not the case, we change the orientation (hence the sign of $a$) and we are done. So, we assume once and for all $a>0,\Phi>0$ and distinguish between $b$ being positive or negative. 

\subsection{The case $a>0,b>0$.}\label{sec31}
The classification result obtained in this section is the following:
\begin{teo}\label{th:clasificacionespeciales}
Let be $a>0,b>0$ and $\Phi\in C^1(\S^2)$ positive and even. 

A complete, rotational $\Phi$-surface is one of the following:
\begin{enumerate}
\item[1.1] A vertical, circular cylinder;
\item[1.2] a strictly convex sphere;
\item[1.3] a properly embedded surface of unduloid-type; and
\item[1.4] a properly immersed surface of nodoid-type, with self intersections.
\end{enumerate}

Besides open pieces of the above examples, a non-complete, rotational $\Phi$-surface is one of the following:
\begin{enumerate}
\item[1.5] a surface of $K>0$, with two cusp points at the $z$-axis and strictly monotonous height;
\item[1.6] a surface of $K>0$, homeomorphic to an annulus and with strictly monotonous height;
\item[1.7] a surface of $K>0$, homeomorphic to an annulus and with non-monotonous height; and
\item[1.8] a surface of $K$ changing sign, with two cusp points at the $z$-axis and strictly monotonous height.
\end{enumerate}
\end{teo}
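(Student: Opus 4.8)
The classification follows from a qualitative study of the autonomous system \eqref{eq:planofases} in the phase plane $\Theta$, specialised to $a>0$, $b>0$. In this regime property (1) of Section \ref{sec21} places $\mathcal{S}$ in $\Theta_2$, while $\phi>0$ places $\Gamma$ in $\Theta_1$ together with the equilibrium $e_0=(a/\phi(0),\pi/2)$. The first task is to record the signs of the components of the field: $x'=\cos\theta$ changes sign only across $\{\theta=\pi/2\}\cup\{\theta=3\pi/2\}$; in $\Theta_1$ one has $\theta'>0$ exactly on $\{x>\Gamma(\theta)\}$; and in $\Theta_2$ one has $\theta'>0$ exactly on $\{x>\mathcal{S}(\theta)\}$ and $\theta'<0$ on $\{0<x<\mathcal{S}(\theta)\}$. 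The lines $\theta=\pi/2,3\pi/2$ together with $\Gamma$ and $\mathcal{S}$ then cut $\Theta$ into regions of monotonicity of both coordinates, which is the engine of the whole argument. Linearising \eqref{eq:planofases} at $e_0$ and using that $\phi'(0)=0$ (evenness) and $a^2+b\phi(0)>0$ (automatic here, since $a,b,\phi(0)>0$) shows the eigenvalues at $e_0$ are purely imaginary; combined with the reversibility of \eqref{eq:planofases} under $(x,\theta,s)\mapsto(x,\pi-\theta,-s)$, whose fixed set $\{\theta=\pi/2\}$ contains $e_0$, this forces $e_0$ to be a nonlinear centre, and the constant orbit $e_0$ is the cylinder 1.1.

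The centre $e_0$ is surrounded by a maximal nested family of periodic orbits, and the plan is to follow this family outwards. The small orbits are contained in $\Theta_1$, where $z'=\sin\theta>0$; hence $z$ is strictly monotone, and the reflection symmetry across $\{\theta=\pi/2\}$ makes the profile curve a symmetric periodic wave, so the resulting surface is a properly embedded surface of unduloid-type (1.3). Growing the family, one shows it is bounded by orbits that leave $\Theta_1$ through $\{\theta=\pi\}$; those crossing $\{\theta=\pi\}$ at sufficiently large $x$ stay above $\mathcal{S}$, keep $\theta$ strictly increasing through $3\pi/2$ and $2\pi\equiv 0$, and — by the reflection symmetries about $\{\theta=\pi/2\}$ and $\{\theta=3\pi/2\}$ — are closed orbits once $\theta$ is read modulo $2\pi$. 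Along such an orbit $z'=\sin\theta$ changes sign, so the profile curve develops loops and the surface is a properly immersed surface of nodoid-type with self-intersections (1.4).

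By Lemma \ref{orbitasmasmenos} there are orbits $\gamma_+$ and $\gamma_-$ with endpoints $(0,0)$ and $(0,\pi)$. Using the sign of $\theta'$ and the behaviour of $\Gamma$ near $(0,0)$, one checks $\gamma_+$ enters and remains in $\{x>\Gamma(\theta)\}\cap\{\theta<\pi/2\}$, so $\theta$ increases; since $e_0$ is a centre $\gamma_+$ cannot limit to it, and being a regular orbit it cannot asymptote to an interior point of $\Gamma$ either, so it meets $\{\theta=\pi/2\}$ transversally at some $x^*>a/\phi(0)$. The reflection symmetry about $\{\theta=\pi/2\}$ then forces $\gamma_+$ to be symmetric and to terminate at $(0,\pi)$, hence to coincide with $\gamma_-$; thus $\gamma_+\cup\gamma_-$ closes up to the profile of a sphere along which $\kappa_1=\theta'>0$ and $\kappa_2=\sin\theta/x>0$, a strictly convex sphere (1.2), and this orbit is the separatrix enclosing $e_0$ and the unduloid family. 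Every remaining orbit is neither periodic nor one of $\gamma_\pm$, so by the monotonicity regions (a Poincar\'e--Bendixson-type dichotomy) each of its ends converges either to a point of $\mathcal{S}$ in finite $s$ — a circle of singular points, giving an annulus — or to the axis $\{x=0\}$ in a non-smooth (cuspidal) manner. Sorting these orbits by whether they meet $\Gamma$ (so that $\theta'$, hence $K=\theta'\sin\theta/x$, changes sign) and whether they stay in $\Theta_1$ (monotone height) or enter $\Theta_2$ (non-monotone height), and verifying case by case that none of them extends to a complete surface, yields exactly the four types 1.5--1.8 and shows the classification is exhaustive.

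The heart of the proof — and its main obstacle — is the global control of the orbits in the last two paragraphs: proving that the orbits circling $e_0$ are genuinely closed rather than slowly spiralling, for which the reversibility is indispensable; pinning down the exact separatrix and the ``last'' unduloid / ``first'' nodoid; and excluding degenerate limit behaviour such as an orbit accumulating on $\mathcal{S}$ without converging, or oscillating indefinitely between $\Gamma$ and the boundary. A further technical point is the degeneracy of the field of \eqref{eq:planofases} at the singular locus — at $(0,0)$, $(0,\pi)$ and along $\mathcal{S}$ the Picard--Lindel\"of theorem does not apply — so the endpoint analysis there must rely on Lemma \ref{orbitasmasmenos} and on L'H\^opital-type expansions rather than on standard ODE theory.
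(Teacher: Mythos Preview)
Your outline follows the paper's strategy (phase plane analysis, linearisation plus reversibility at $e_0$, Lemma \ref{orbitasmasmenos} for the sphere), but there are two genuine gaps.

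First, in the sphere argument you do not exclude the possibility that $\gamma_+$ escapes to $x\to\infty$ with $\theta\to\theta_0\in(0,\pi/2]$. In the monotonicity region $\{x>\Gamma(\theta),\ \theta<\pi/2\}$ both coordinates increase, so ``cannot asymptote to an interior point of $\Gamma$'' and ``cannot limit to $e_0$'' do not rule this out. The paper handles it by writing $\theta$ as a function of $x$, letting $x\to\infty$ in the equation for $\theta'$, and deriving the contradiction $\phi(\cos\theta_0)/a=0$.

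Second, and more seriously, your sorting of the non-complete orbits is wrong and omits the main technical step. You equate ``monotone height'' with ``stays in $\Theta_1$'', but types $\mathit{1.5}$ and $\mathit{1.6}$ live entirely in $\Theta_2$ (where $z'=\sin\theta<0$ is also monotone), inside the region $\{x<\mathcal{S}(\theta)\}$; only type $\mathit{1.7}$ genuinely crosses between $\Theta_1$ and $\Theta_2$. Moreover, the decisive fact for type $\mathit{1.5}$ is that the orbit emanating from $(0,\theta_1)$, $\theta_1\in(3\pi/2,2\pi)$, reaches $\theta=3\pi/2$ rather than converging to $\mathcal{S}$; this is not a Poincar\'e--Bendixson consequence and the paper proves it by a comparison argument (Claim \ref{claim}): one uses the explicit first integral \eqref{integralprimera} for the constant case $\phi=0$ to build a barrier orbit, then compares $\theta'$ at a hypothetical crossing. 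Your sketch has nothing playing this role. A related oversight: at points $(0,\theta_0)$ with $\theta_0\notin\{0,\pi\}$ the system \eqref{eq:planofases} is \emph{regular} (since $b\sin\theta_0\neq0$), so standard existence/uniqueness applies there---this is exactly how the paper launches the orbits for types $\mathit{1.5}$ and $\mathit{1.8}$, and it is what forces the case split between cusp endpoints and annular endpoints on $\mathcal{S}$.
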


\begin{proof}
Let $\phi\in C^1([-1,1])$ the 1-dimensional function defined by Eqn. \eqref{condicionrotsim} in terms of $\Phi$. In particular, $\p$ is positive and even, and since $a,b>0$, the structure of the phase plane is as follows (see Fig.\ref{fig:planofaseselipticas}):
\begin{enumerate}
\item The curve $\s$ lies entirely in $\Theta_2$ and is a compact arc joining the points $(0,\pi)$ and $(0,2\pi)$. Such a curve does not exist in $\Theta_1$
\item The curve $\Gamma$ lies entirely in $\Theta_1$ and is a compact arc joining the points $(0,0)$ and$ (0,\pi)$. Such a curve does not exist in $\Theta_2$.
\end{enumerate}

\begin{figure}[h]
\centering
\includegraphics[width=.5\textwidth]{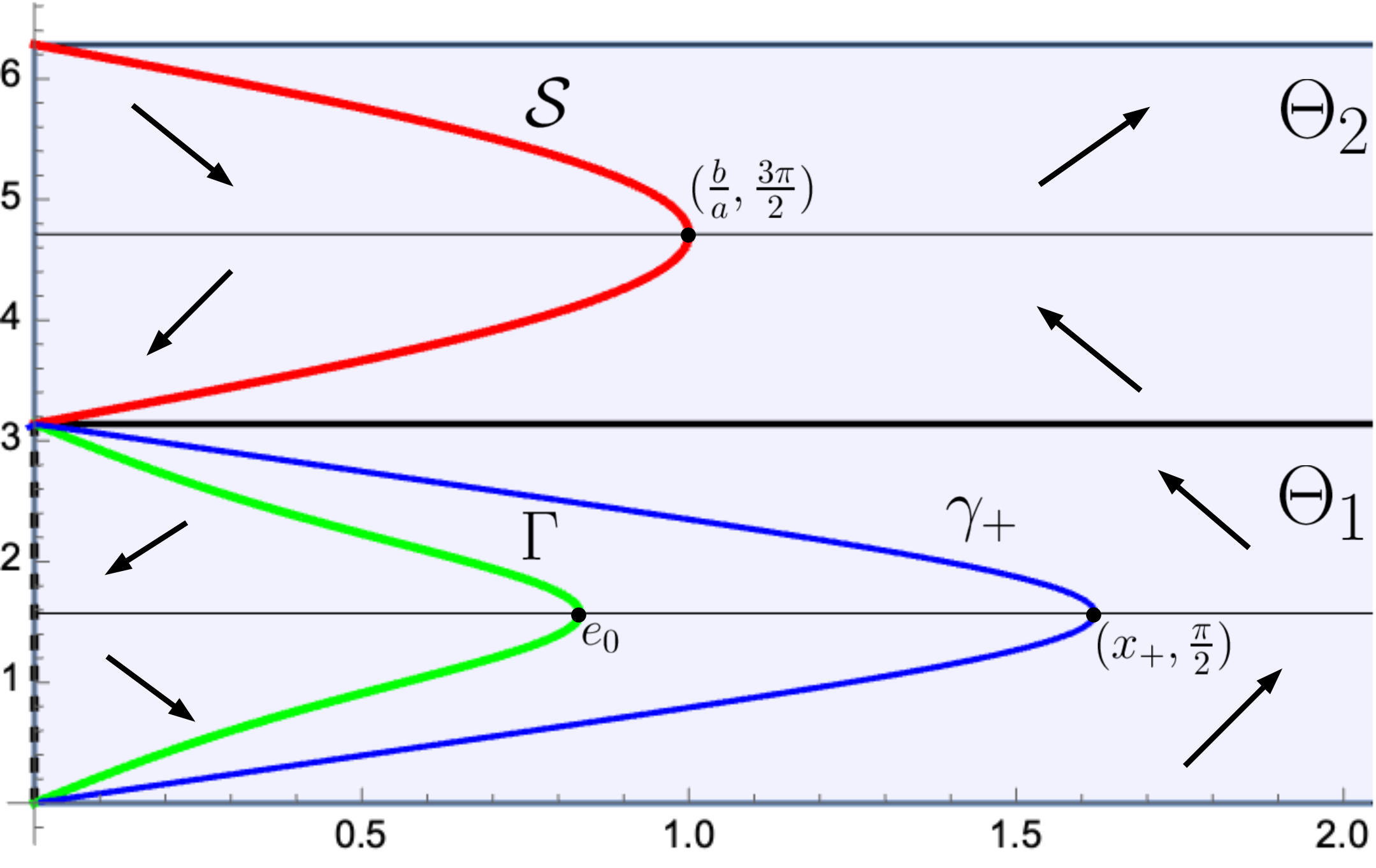}
\caption{The phase plane for the case $a>0,b>0$ and the orbit of the rotational sphere in blue.}
\label{fig:planofaseselipticas}
\end{figure}

We prove first the existence of the complete examples. Since $\Gamma(\pi/2)=a/\p(0)$, the equilibrium $e_0=(a/\p(0),\pi/2)$ exists in $\Theta_1$ and generates a circular, flat cylinder of constant mean curvature $\p(0)/(2a)$, obtaining the first example in the classification.

To prove the existence of a strictly convex sphere, consider the orbit $\gamma_+(s)=(x_+(s),\t_+(s))$ having $(0,0)$ as endpoint, given by Lemma \ref{orbitasmasmenos}, and the corresponding rotational $\Phi$-surface, $\sig_+$. First, we prove that $x_+(s)\rightarrow\infty,\t_+(s)\rightarrow\t_0\in(0,\pi/2]$ cannot happen.

Arguing by contradiction, suppose that $x_+(s)\rightarrow\infty$ and $\t_+(s)\rightarrow\t_0\in(0,\pi/2]$, which implies that $\gamma_+$ is strictly contained in the monotonicity region $\{x>\Gamma(\t),\ \t<\pi/2\}$. In particular, the function $\t_+(s)$ can be seen as a function of $x_+(s)$ and the chain rule yields
$$
\t_+'(s)=\frac{d\t_+}{ds}=\frac{d\t_+}{dx_+}\frac{dx_+}{ds}=\t_+'(x)\cos\t_+(x).
$$
Therefore, $\t_+(x)\rightarrow\t_0\in(0,\pi/2]$ as $x\rightarrow\infty$, and so $\t_+'(x)\rightarrow0$ as $x\rightarrow\infty$. But since $\t_+'(x)\cos\t_+(x)=\t_+'(s)$, we have
$$
\t_+'(x)\cos\t_+(x)=\frac{x\p(\cos\t_+(x))-a\sin\t_+(x)}{ax+b\sin\t_+(x)},
$$
and taking limits as $x\rightarrow\infty$ we get $0=\p(\t_0)/a$, a contradiction. 

So, $\gamma_+$ starts at $(0,0)$ and cannot stay forever in $\{x>\Gamma(\t),\ \t<\pi/2\}$. Next, we see that $\gamma_+$ cannot converge to $e_0$. In fact, we prove that the orbits close enough to $e_0$ are ellipses enclosing $e_0$ in their inner regions. The linearized system of \eqref{eq:planofases} around $e_0$ is given by
$$
\left(\begin{matrix}
u\\
v
\end{matrix}\right)'=
\left(\begin{matrix}
0&-1\\
\frac{a^2+b\p(0)}{(b+\frac{a^2}{\p(0)})^2}&-\frac{a\p'(0)}{a^2+b\p(0)}
\end{matrix}\right)\left(\begin{matrix}
u\\
v
\end{matrix}\right).
$$
Noting that $\p'(0)=0$ since $\p$ is even, we check that the orbits of the linearized are ellipses around the origin. By classical theory of non-linear autonomous systems, there are two possible configurations for the orbits of \eqref{eq:planofases} around $e_0$: either all such orbits are closed curves having $e_0$ in their inner regions; or they spiral around $e_0$. However, the latter possibility cannot happen since the orbits of \eqref{eq:planofases} are symmetric with respect to the line $\t=\pi/2$. In particular, all the orbits stay at a positive distance to $e_0$.

Since $\gamma_+$ cannot stay forever in the monotonicity region $\{x>\Gamma(\t),\ \t<\pi/2\}$, nor converge to $e_0$, the only possibility is that it intersects the line $\t=\pi/2$ at some finite point $(x_+,\pi/2),\ x_+>a/\p(0)$. By symmetry of the phase plane, $\gamma_+$ has $(0,\pi)$ as endpoint, i.e. it agrees with the orbit $\gamma_-$ described in Lemma \ref{orbitasmasmenos}, and is a compact arc joining $(0,0)$ and $(0,\pi)$. See Fig. \ref{fig:planofaseselipticas}, the orbit in blue. In conclusion, $\sig_+$ is a rotational, strictly convex $\Phi$-sphere.

Next, we prove the existence of a 1-parameter family of unduloid-type $\Phi$-surfaces. For that, fix some $x_0\in(0,a/\p(0))$ and let $\gamma_{x_0}(s)$ be the orbit in $\Theta_1$ passing through $(x_0,\pi/2)$ at $s=0$. For $s>0$, $\gamma_{x_0}(s)$ lies in the monotonicity region $\{x<\Gamma(\t),\ \t<\pi/2\}$ and it intersects the curve $\Gamma$, where the $\t$-coordinate reaches a minimum. Then, $\gamma_{x_0}$ lies in $\{x>\Gamma(\t),\ \t<\pi/2\}$, and since it cannot intersect $\gamma_+$ nor converge to $e_0$, has to reach the line $\t=\pi/2$ at some point $(\widehat{x}_0,\pi/2)$, with $a/\p(0)<\widehat{x}_0<x_+$. By symmetry, $\gamma_{x_0}$ is a closed orbit having $e_0$ in its inner region. This orbit generates a properly embedded (since $\t\in(0,\pi)$ and hence $z'(s)=\sin\t>0$) $\Phi$-surface, $\sig_{x_0}$, that is periodic and has an unduloid-type behavior. The parameter $x_0$ defining this family is the \emph{neck-size}, i.e. the minimum distance of $\sig_{x_0}$ to the axis of rotation.

\begin{figure}[h]
\centering
\includegraphics[width=.6\textwidth]{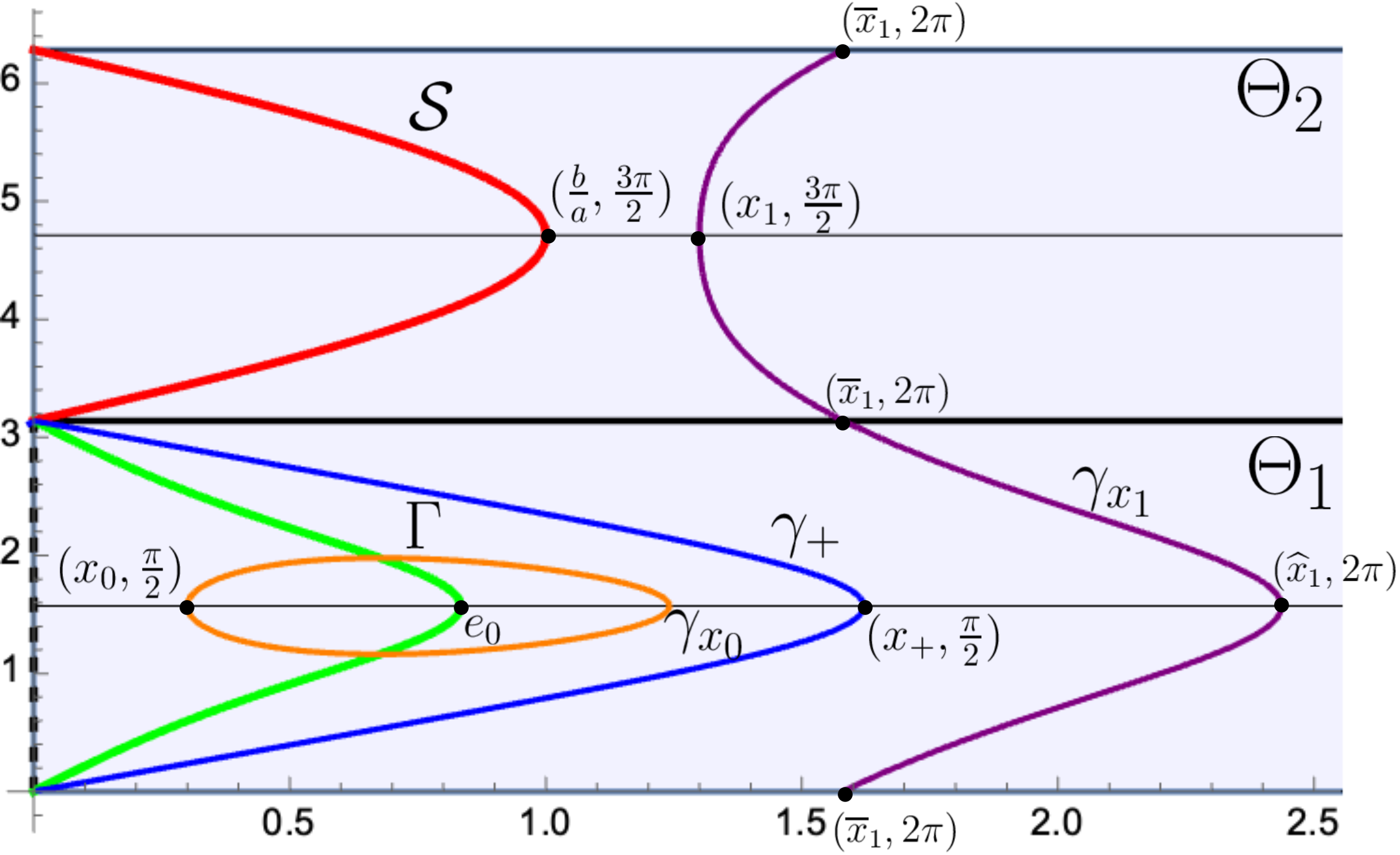}\hspace{1cm}\includegraphics[width=.2\textwidth]{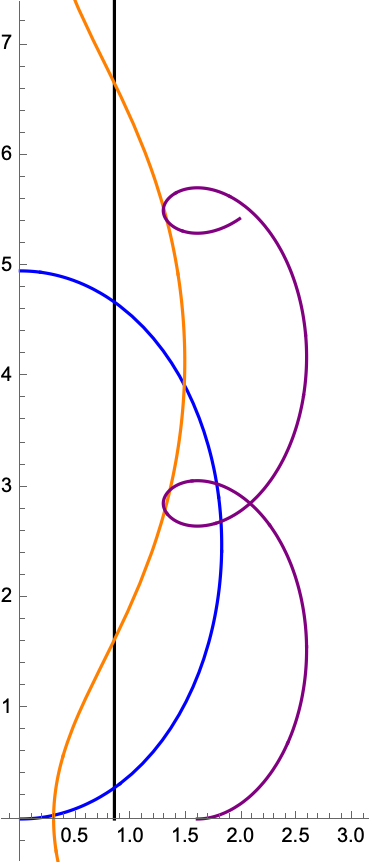}  
\caption{Left: the phase plane for $a>0,b>0$ and a prescribed function $\Phi$ under the hypotheses of Th. \ref{th:clasificacionespeciales}. The orbits correspond to the complete examples. Right: the profile curve of the $\Phi$-surface corresponding to each orbit.}
\end{figure}

Finally, we prove the existence of a 1-parameter family of nodoid-type $\Phi$-surfaces. Fix some $x_1>b/a$, and consider the orbit $\gamma_{x_1}(s)$ passing through $(x_1,3\pi/2)$ at $s=0$. From the monotonicity properties of $\Theta_2$, $\gamma_{x_1}(s)$ lies in $\{x>\s(\t),\ \t\in(3\pi/2,2\pi)\}$ for $s>0$ and has increasing both $x$ and $\t$-coordinates until reaching the line $\t=2\pi$ at some $(\overline{x}_1,2\pi)$ with $\overline{x}_1>x_1$. Similarly, and by symmetry, $\gamma_{x_1}(s)$ lies in $\{x>\s(\t),\ \t\in(\pi,3\pi/2)\}$ for $s<0$ and has $(\overline{x}_1,\pi)$ as endpoint.

At this point, when $s$ further decreases, $\gamma_{x_1}$ lies in the monotonicity region $\{x>\Gamma(\t),\ \t\in(\pi/2,\pi)\}$ of $\Theta_1$ until intersecting the line $\t=\pi/2$ at some $(\widehat{x}_1,\pi/2)$ with $\widehat{x}_1>x_+$. Again by symmetry, $\gamma_{x_1}$ has the point $(\overline{x}_1,0)$ as endpoint. Note that in particular, $\gamma_{x_1}$ does not intersect $\s$, hence the corresponding surface $\sig_{x_1}$ has no singularities. Moreover, it is properly immersed, periodic and has self-intersections, sharing the same properties as the constant mean curvature nodoids. The parameter $x_1$ defining this family is again the neck-size of each surface.

This completes the classification result in the complete case. Now we describe the non-complete examples.

First, fix some $\t_0\in(0,\pi/2]$. At the point $(0,\t_0)$, system \eqref{eq:planofases} has existence and uniqueness; here the fact that $b\neq0$ is paramount. Consequently, there exists an orbit $\gamma_{\t_0}$ such that $\gamma_{\t_0}(0)=(0,\t_0)$ and $\gamma_{\t_0}(s)$ lies in $\{x<\Gamma(\t),\ \t<\pi/2\}$ for $s>0$ small enough. By uniqueness, $\gamma_{\t_0}$ cannot intersect any $\gamma_{x_0}$ corresponding to the unduloids, nor $\gamma_+$ corresponding to the sphere. So, $\gamma_{\t_0}$ intersects the line $\t=\pi/2$ at some finite point $(x_{\t_0},\pi/2)$. By symmetry, $\gamma_{\t_0}$ is a compact arc having the point $(0,\pi-\t_0)$ as its other endpoint. See Fig. \ref{fig:elipticasnocompletas1} left, the orbit in orange.

\begin{figure}[h]
\centering
\includegraphics[width=.6\textwidth]{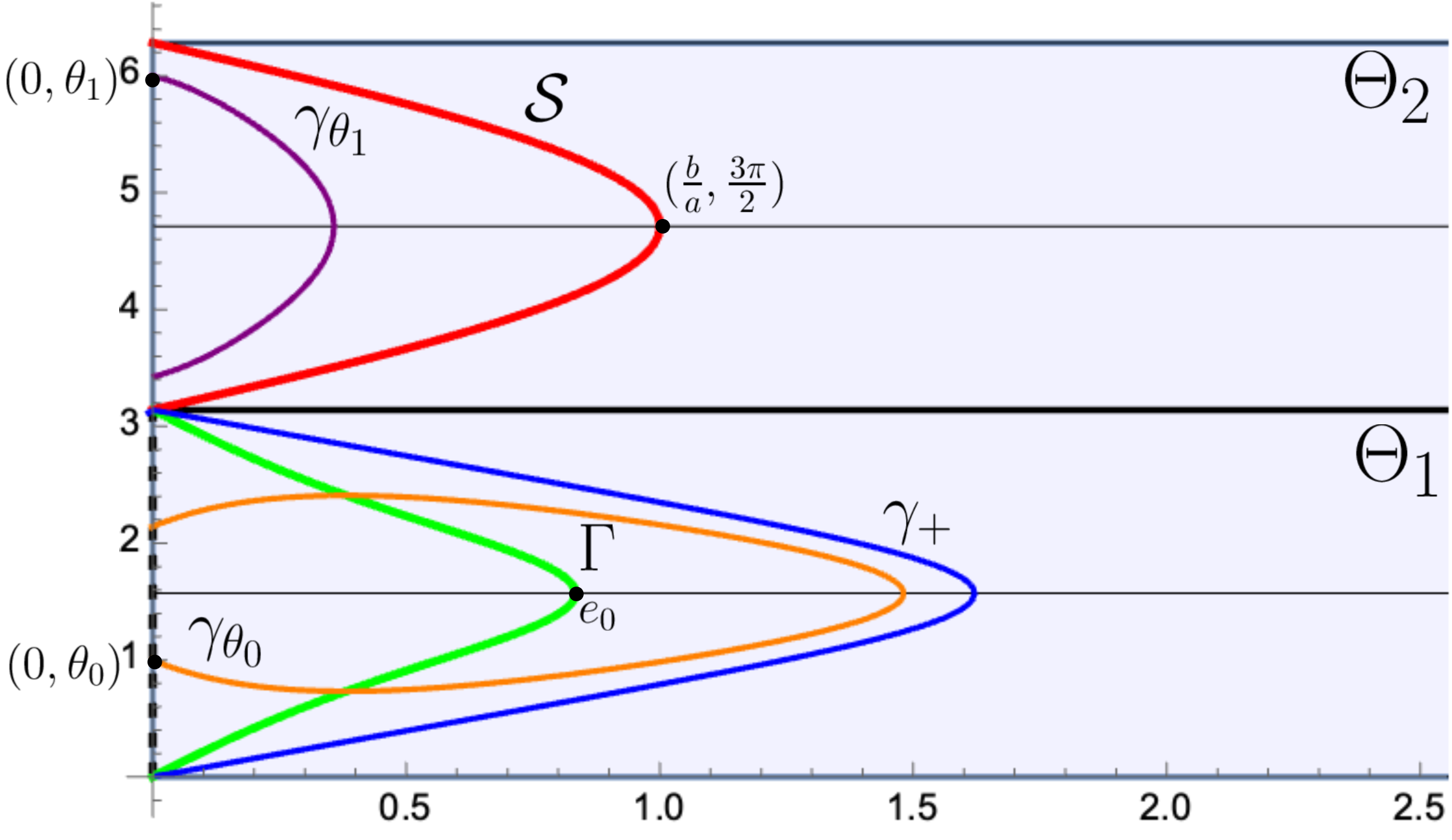}\hspace{1cm}\includegraphics[width=.15\textwidth]{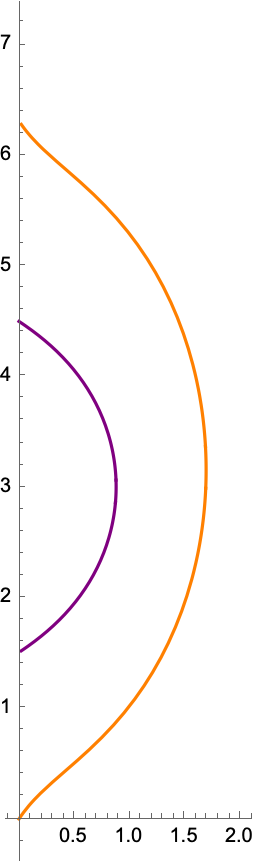}
\caption{Left: the orbits corresponding to the rotational $\Phi$-surfaces of type $\mathit{1.5,1.6}$ of Th. \ref{th:clasificacionespeciales}. Right: the profile curve of the $\Phi$-surface corresponding to each orbit.}
\label{fig:elipticasnocompletas1}
\end{figure}

This orbit generates a $\Phi$-surface, $\sig_{\t_0}$, that intersects the axis of rotation at a bottom point with angle function $\cos\t_0$ and at a top point with angle function $-\cos(\t_0)$. In particular, $\sig_{\t_0}$ is singular at these cusp points where it fails to be $C^1$, hence is non-complete. Recall that $\gamma_{\t_0}$ lies in both $\{x<\Gamma(\t)\}$ and $\{x>\Gamma(\t)\}$, hence the Gauss curvature of $\sig_{\t_0}$ changes of sign and vanishes at two parallels, which correspond to the points of $\gamma_{\t_0}$ intersecting $\Gamma$. In particular, none of these surfaces has strictly monotonous angle function. See Fig. \ref{fig:elipticasnocompletas1} right, the profile curve in orange. This proves the existence of surfaces of Item $\mathit{1.8}$ of Thm.\ref{th:clasificacionespeciales}.

Now, fix some $\t_1\in(3\pi/2,2\pi)$. Again, at the point $(0,\t_1)$ system \eqref{eq:planofases} has existence and uniqueness. Therefore, there exists an orbit $\gamma_{\t_1}$ such that $\gamma_{\t_1}(0)=(0,\t_1)$ and $\gamma_{\t_1}(s)$ lies in $\{x<\s(\t),\ \t\in(3\pi/2,2\pi)\}$ for $s>0$ small enough. We prove next that $\gamma_{\t_1}$ cannot converge to a point in $\s$.

\begin{claim}\label{claim}
The orbit $\gamma_{\t_1}$ cannot converge to a point in $\s$.
\end{claim}

\begin{proofclaim}
We begin by highlighting that a first integral of system \eqref{eq:planofases} for the particular choice $\phi=c\in\R$ is
\begin{equation}\label{integralprimera}
a(x(s)\sin\t(s)-x(s_0)\sin(\t_0))+\frac{b}{2}(\sin^2\t(s)-\sin^2\t(s_0))=\frac{c}{2}(x(s)^2-x(s_0)^2).
\end{equation}

In particular, if $c=0$ and we fix the initial condition $x(s_0)=0,\ \t(s_0)=\t_0\in(3\pi/2,2\pi)$ the first integral transforms into
\begin{equation}\label{integralprimeraparticular}
ax(s)\sin\t(s)+\frac{b}{2}(\sin^2\t(s)-\sin^2\t_0)=0.
\end{equation}

Let $\sigma_{\t_0}(t)=(x_{\t_0}(t),\t_{\t_0}(t))$\footnote{We use the parameter $t$ for the orbits of system \eqref{eq:planofases} for the particular choice $\phi=0$} be an orbit of system \eqref{eq:planofases} for $\phi=0$ and the initial condition $x(s_0)=0,\ \t(t_0)=\t_0\in(3\pi/2,2\pi)$. If $\sigma_{\t_0}(t)\rightarrow\s$ then $\t_{\t_0}\rightarrow\t_*$ and $x_{\t_0}\rightarrow-b\sin\t_*/a$, for some $\t_*\in(3\pi/2,\t_0)$. Substituting in \eqref{integralprimeraparticular} yields
$$
-\frac{b}{2}\sin^2\t_*-\frac{b}{2}\sin^2\t_0=0,
$$
which is a contradiction since $b>0$. Therefore, a simple substitution in \eqref{integralprimeraparticular} shows that $\sigma_{\t_0}(t)$ reaches the line $\t=3\pi/2$ at $(x_{\t_0},3\pi/2)$ with $x_{\t_0}=b/(2a)\cos^2\t_0$.

Let us consider now the orbit $\gamma_{\t_1}=(x_{\t_1}(s),\t_{\t_1}(s))$, and suppose that there exist two instants $s_1,t_1$ such that $\gamma_{\t_1}(s_1)=\sigma_{\t_0}(t_1)$. Comparing system \eqref{eq:planofases} for the prescribed function $\phi>0$ and the particular case $\phi=0$, respectively, yields
$$
\t_{\t_1}'(s_1)<\t_{\t_0}'(t_1).
$$

Now we stand in position to prove Claim \ref{claim}. Arguing by contradiction, suppose that $\gamma_{\t_1}$ converges to $\s$. Consider some $\t_0\in(\t_1,2\pi)$ and let $\sigma_{\t_0}$ be the orbit of system \eqref{eq:planofases} for $\phi=0$ and the initial condition $\sigma_{\t_0}(t_0)=(0,\t_0)$. We have already proven that this orbit has $(b/(2a)\cos^2\t_0,3\pi/2)$ as finite endpoint. Also, $\sigma_{\t_0}(t)$ for $t>t_0$ small enough lies above $\gamma_{\t_1}$; see Fig.\ref{fig:pruebaclaim}.

\begin{figure}[h]
\centering
\includegraphics[width=.6\textwidth]{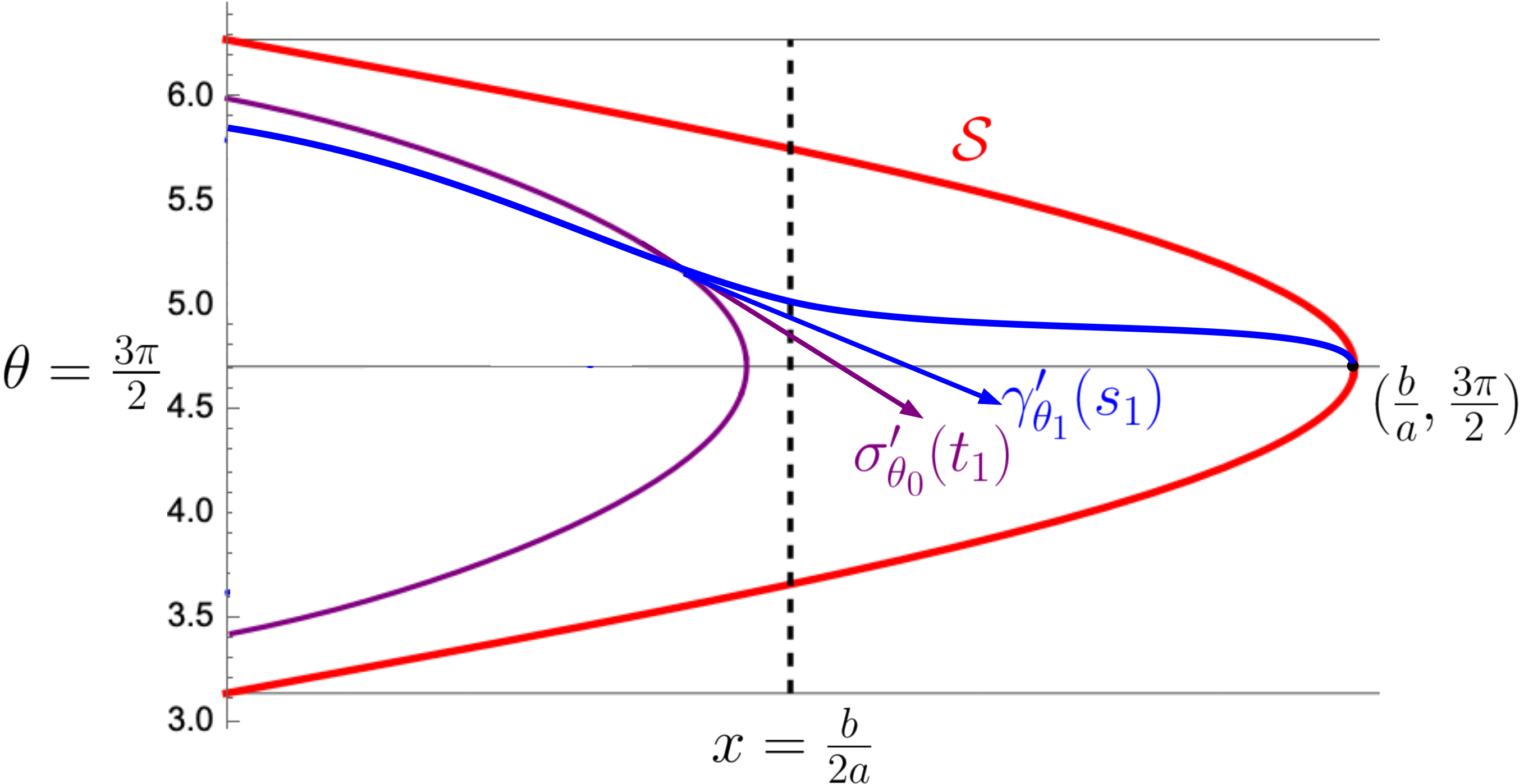}
\caption{The diagram of the contradiction in the proof of Claim \ref{claim}.}
\label{fig:pruebaclaim}
\end{figure}

If $\gamma_{\t_1}$ converges to $\s$, then by continuity $\gamma_{\t_1}(s_1)=\sigma_{\t_0}(t_1)$ with $s_1>s_0,\ t_1>t_0$. But at these instants we have $\t_{\t_1}'(s_1)>\t_{\t_0}'(t_1)$, arriving to a contradiction. Thus, $\gamma_{\t_1}$ intersects the line $\t=3\pi/2$ at some finite point $(x_{\t_1},3\pi/2)$, with $x_{\t_1}<x_{\t_0}<b/(2a)$.
\end{proofclaim}

Consequently, $\gamma_{\t_1}$ passes through some $(x_{\t_1},3\pi/2),\ x_{\t_1}<b/(2a)$, and by symmetry $\gamma_{\t_1}$ closes at the point $(0,3\pi/2-\t_1)$; see Fig. \ref{fig:elipticasnocompletas1} left, the orbit in purple. The $\Phi$-surface $\sig_{\t_1}$ generated by $\gamma_{\t_1}$, is compact and intersects the axis of rotation at two singular cusp points, where $\sig_{\t_1}$ fails to be $C^1$. Moreover, $\sig_{\t_1}$ has strictly positive Gauss curvature. See Fig. \ref{fig:elipticasnocompletas1} right, the profile curve in purple. This proves the existence of surfaces of Item $\mathit{1.5}$ of Thm.\ref{th:clasificacionespeciales}.

We prove the existence of the last non-complete examples. First, note that the definition of $\widehat{x}_1$ in the construction of the complete, nodoidal $\Phi$-surfaces, maps the interval $(b/a,\infty)$ to $(x_1^\infty,\infty)$, for some $x_1^\infty\geq x_+$\footnote{In fact, $x_1^\infty=\lim \widehat{x}_1$ as the corresponding $x_1\rightarrow b/a$.}. We show that $x_1^\infty\neq x_+$.

Arguing by contradiction, suppose that $x_1^\infty=x_+$ and consider the orbit $\gamma$ passing through $(\overline{x},\pi)$, with $0<\overline{x}<b/a$. Then, for $s<0$, $\gamma(s)$ lies in $\{x>\Gamma(\t),\ \t\in(\pi/2,\pi)\}$ and stays there for $s$ decreasing until converging to the line $\t=\pi/2$. However, $\gamma$ cannot reach any $(\widehat{x},\pi/2)$ since necessarily $\widehat{x}\geq x_+$ and we already described the orbits passing through these points as the complete nodoids and the sphere. In particular, by the way it was defined, the orbit $\gamma_{x_1^\infty}$ passing through $(x_1^\infty,\pi/2)$ has to converge to the point $(b/a,3\pi/2)$ after intersecting the line $\t=\pi$ at some $(\overline{x}_1^\infty,\pi)$. By symmetry, there exists an orbit in $\{x>\s(\t),\t\in(3\pi/2,2\pi)\}$ having $(\overline{x}_1^\infty,2\pi)$ as endpoint and converging to the point $(b/a,3\pi/2)$. By analogy, we name this orbit again as $\gamma_{x_1^\infty}$; see Fig. \ref{fig:nodnocompletos} left, the orbit in purple.

The $\Phi$-surface corresponding to the orbit $\gamma_{x_1^\infty}$ has a certain nodoidal shape but with a parallel removed consisting of singular points, hence is non-complete, and this behavior is repeated after a vertical translation. See Fig. \ref{fig:nodnocompletos} right, the profile curve in purple.

\begin{figure}[h]
\centering
\includegraphics[width=.6\textwidth]{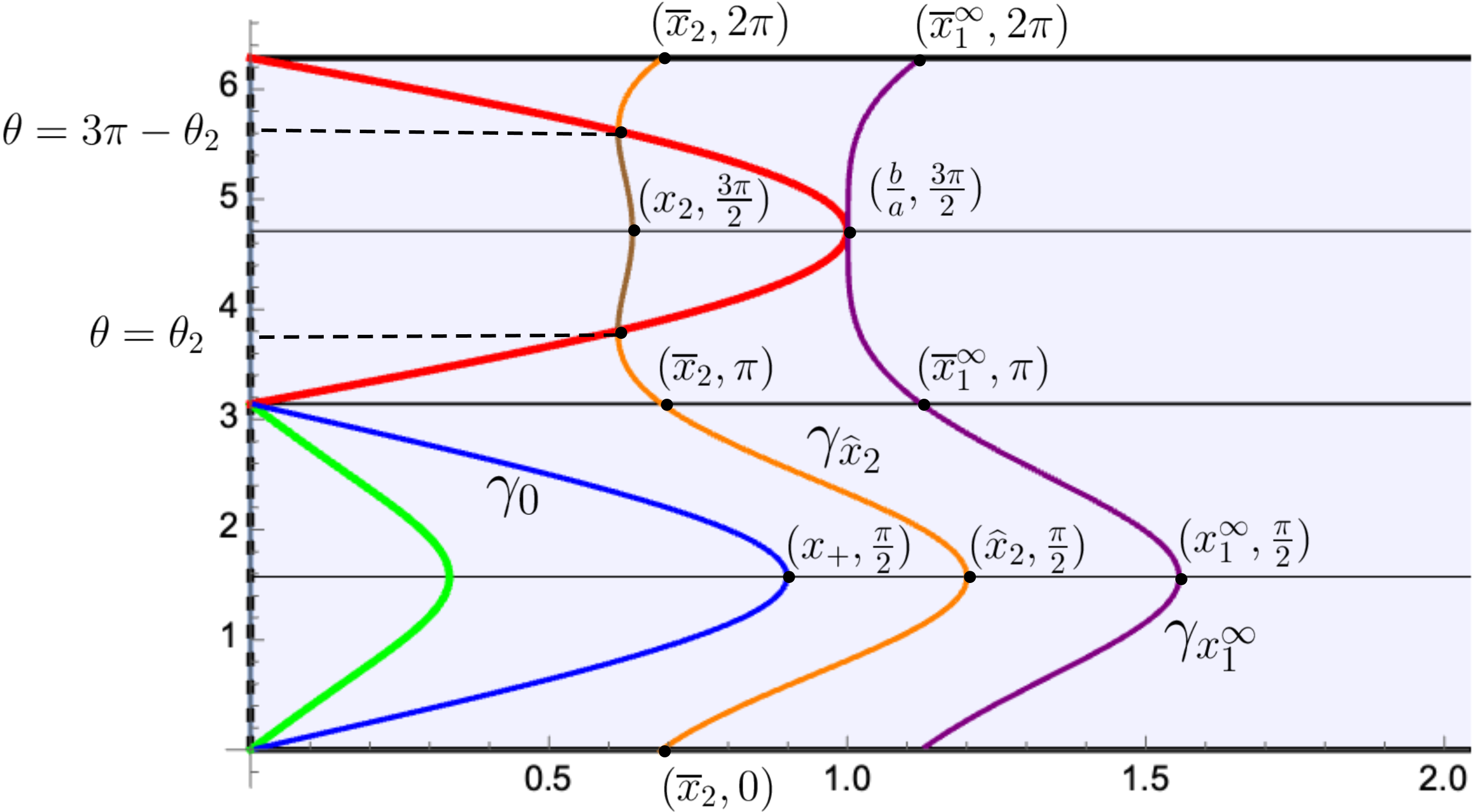}\hspace{.5cm}\includegraphics[width=.35\textwidth]{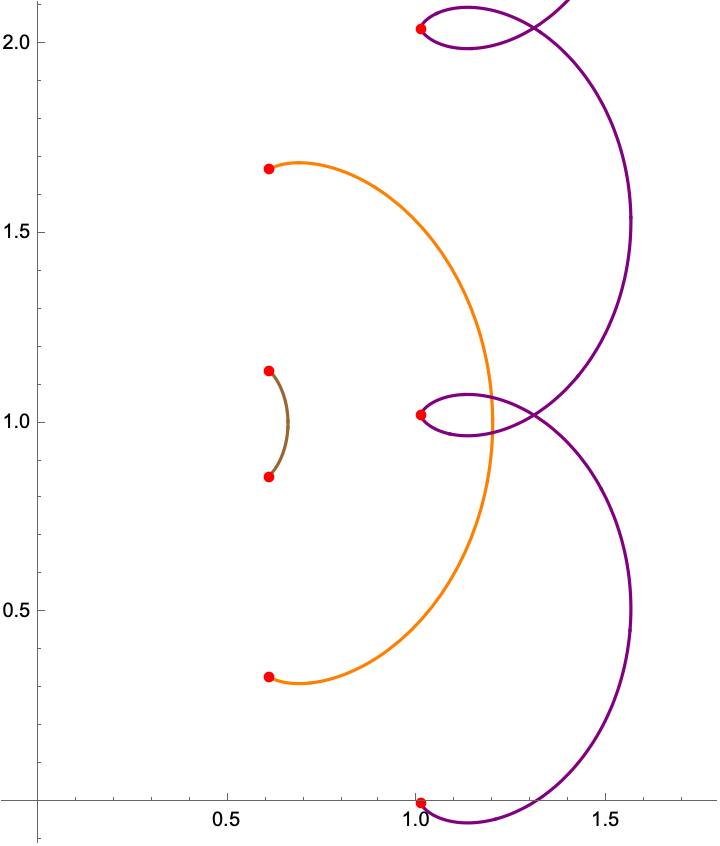}
\caption{Left: the orbits corresponding to the rotational $\Phi$-surfaces of type $\mathit{1.7,1.8}$ of Th. \ref{th:clasificacionespeciales}. Right: the profile curve of the $\Phi$-surface corresponding to each orbit.}
\label{fig:nodnocompletos}
\end{figure}

Now fix some $\widehat{x}_2\in(x_+,x_1^\infty)$ and consider $\gamma_{\widehat{x}_2}(s)$ the orbit passing through $(\widehat{x}_2,\pi/2)$ at $s=0$. For $s>0$, $\gamma_{\widehat{x}_2}$ intersects the line $\t=\pi$ at some $(\overline{x}_2,\pi),\ \overline{x}_2<\widehat{x}_2$, and by symmetry it has $(\overline{x}_2,0)$ as endpoint for $s<0$. When $s>0$ further increases, $\gamma_{\widehat{x}_2}$ enters the region $\{x>\s(\t),\ \t\in(\pi,3\pi/2)\}$ and since $\gamma_{\widehat{x}_2}$ cannot intersect any other orbit, it ends up converging to some $(\s(\t_2),\t_2)$ with $\t_2\in(\pi,3\pi/2)$. By symmetry, there is a symmetric orbit, which we will name again $\gamma_{\widehat{x}_2}$ by analogy, that has the point $(\s(\t_2),3\pi-\t_2)$ as endpoint and converges to $(\overline{x}_2,2\pi)$ as $s$ increases. See Fig. \ref{fig:nodnocompletos} left, the orbit in orange. 

The corresponding $\Phi$-surface has strictly positive Gauss curvature, is homeomorphic to an annulus and has non-monotonous height function. See Fig. \ref{fig:nodnocompletos} right, the profile curve in orange. This proves the existence of the surfaces of Item $\mathit{1.7}$ in Thm.\ref{th:clasificacionespeciales}.

Finally, since the orbits of the phase plane foliate it, there exists an orbit in the region $\{x<\s(\t)\}$ whose extremes converge to the points $(\s(\t_2),\t_2)$ and $(\s(\t_2),3\pi-\t_2)$, and that intersects the line $\t=3\pi/2$ at some $(x_2,3\pi/2)$. See Fig. \ref{fig:nodnocompletos} left, the orbit in brown. The corresponding $\Phi$-surface has all the properties stated in Item $\mathit{1.6}$ of Thm.\ref{th:clasificacionespeciales}, concluding its proof.
\end{proof}

\subsection{The case $a>0,b<0$.}\label{sec32} The classification result obtained in this section is the following:

\begin{teo}\label{th:clasificacionelipticas2}
Let be $a>0,b<0$ and $\Phi\in C^1(\S^2)$ positive and even. 

A complete, rotational $\Phi$-surface of elliptic type is one of the following:
\begin{enumerate}
\item[1.1] A vertical, circular cylinder;
\item[1.2] a strictly convex sphere;
\item[1.3] a properly embedded, periodic surface of unduloid-type; and
\item[1.4] a properly immersed, periodic surface of nodoid-type, with self-intersections.
\end{enumerate}
Besides open pieces of the above examples, a non-complete, rotational $\Phi$-surface of elliptic type is one of the following:
\begin{enumerate}
\item[1.5] a surface of $K>0$, with two cusp points at the $z$-axis and strictly monotonous height;
\item[1.6] a surface of $K>0$, homeomorphic to an annulus and strictly monotonous height;
\item[1.7] a surface of $K>0$, with two cusp points at the $z$-axis and non-monotonous height; and
\item[1.8] a surface of $K$ changing sign, homeomorphic to an annulus and strictly monotonous height.
\end{enumerate}
\end{teo}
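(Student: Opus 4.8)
The plan is to mirror the proof of Thm.~\ref{th:clasificacionespeciales}: carry out a qualitative analysis of the orbits of the autonomous system~\eqref{eq:planofases}, adapted to the configuration of the phase plane forced by $b<0$. The first step is to fix the positive, even function $\p\in C^1([-1,1])$ associated with $\Phi$ via~\eqref{condicionrotsim} and record the structure of $\Theta$. Since now $ab<0$, the singular arc $\s$ is a compact arc in $\Theta_1$ joining $(0,0)$ and $(0,\pi)$, and it does not meet $\Theta_2$; the inflection curve $\Gamma$ also lies in $\Theta_1$, because $a\sin\t\,\p(\cos\t)>0$ exactly for $\t\in(0,\pi)$. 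The ellipticity hypothesis $a^2+b\Phi>0$ is precisely the condition $\Gamma(\t)\neq\s(\t)$ for every $\t$, and since $\Gamma$ is above $\s$ near the origin it is strictly above $\s$ throughout $\Theta_1$; moreover $\s$, $\Gamma$ are symmetric about $\t=\pi/2$ and the equilibrium $e_0=(a/\p(0),\pi/2)$ lies on $\Gamma$, strictly to the right of $\s$. I would also chart the monotonicity regions that $\s$, $\Gamma$ and the line $\t=\pi/2$ cut out in $\Theta_1$, and note that $\Theta_2$ carries no special curve --- the opposite situation to the case $a>0,b>0$.

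For the complete examples (items 1.1--1.4) the arguments of Thm.~\ref{th:clasificacionespeciales} transfer almost verbatim. The equilibrium $e_0$ yields the vertical cylinder of radius $a/\p(0)$. For the sphere, take the orbit $\gamma_+$ with endpoint $(0,0)$ from Lemma~\ref{orbitasmasmenos}; it issues into $\{x>\Gamma(\t)\}$, and since $a>0$ the denominator $ax+b\sin\t$ is bounded away from $0$ whenever $x$ is large, so the chain-rule argument (view $\t_+$ as a function of $x_+$ and let $x\to\infty$) again excludes $x_+\to\infty$; the linearization of~\eqref{eq:planofases} at $e_0$ has purely imaginary eigenvalues because $\p'(0)=0$ and $a^2+b\p(0)>0$, so by the symmetry of $\Theta$ about $\t=\pi/2$ the nearby orbits are closed and $\gamma_+$ cannot converge to $e_0$; hence $\gamma_+$ reaches $\t=\pi/2$ at a finite point and, by symmetry, closes at $(0,\pi)$, coinciding with $\gamma_-$ and generating a strictly convex $\Phi$-sphere. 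The unduloids are the closed orbits encircling $e_0$ through the points $(x_0,\pi/2)$, parametrized by the neck-size, and the nodoids come from orbits through $(x_1,3\pi/2)$ which travel through the $\s$-free half-strip $\Theta_2$ (so they meet no point of $\s$ and produce no singular parallel) before entering $\Theta_1$; properness and embeddedness of the unduloids, and the self-intersections of the nodoids, follow from the sign of $z'=\sin\t$ along each arc, exactly as before.

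The genuinely different part is the non-complete family (items 1.5--1.8), where the relocation of $\s$ into $\Theta_1$ interchanges the roles of the two singular half-strips with respect to the case $b>0$. An orbit issuing from a point $(0,\t_1)$ with $\t_1\in(3\pi/2,2\pi)$ --- existence and uniqueness hold there because $b\neq0$ --- now lives in the $\s$-free region $\Theta_2$, hence cannot converge to $\s$ and must close at $(0,3\pi-\t_1)$, giving a compact surface with two cusp points at the axis; since $\Theta_2$ contains no branch of $\Gamma$, the curvature $\t'$ keeps a sign along it and $K$ does not vanish. Symmetrically, the orbits emanating from $(0,\t_0)$ with $\t_0\in(0,\pi/2]$ live in the half-strip carrying $\s$, and the alternative for such an orbit is to converge to a point of $\s$ --- producing an annulus with a singular parallel, whose Gauss curvature changes sign if the orbit crosses $\Gamma$ --- or to escape to $\t=\pi/2$ and close at $(0,\pi-\t_0)$. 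The remaining examples are then obtained, as in the proof of Thm.~\ref{th:clasificacionespeciales}, by foliating the regions bounded by $\s$, $\Gamma$ and the already-constructed orbits, and reading off from the regions an orbit visits whether the corresponding $\Phi$-surface is an annulus or ends in cusp points, whether $K$ has a fixed sign, and whether the height $z(s)$ is monotonous; this bookkeeping is what reshuffles items 1.7 and 1.8 relative to the case $a>0,b>0$.

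The main obstacle is the analogue of Claim~\ref{claim}, namely deciding for the orbits near $\s$ inside $\Theta_1$ whether they converge to $\s$ in finite ``time'' or escape to $\t=\pi/2$. I would treat it exactly as in the proof of Thm.~\ref{th:clasificacionespeciales}, comparing~\eqref{eq:planofases} with the integrable model $\p\equiv0$, whose first integral is~\eqref{integralprimera}; with $x(s_0)=0$ this becomes~\eqref{integralprimeraparticular}, $ax\sin\t+\tfrac{b}{2}(\sin^2\t-\sin^2\t_0)=0$, and substituting a hypothetical limit point on $\s$ of a model orbit yields $-\tfrac{b}{2}(\sin^2\t_*+\sin^2\t_0)=0$, where the sign $b<0$ enters the analysis and pins down the endpoint of the model orbit. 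The delicate point, as in Claim~\ref{claim}, is to compare $\t'$ along an orbit of~\eqref{eq:planofases} with $\p>0$ against an orbit of the model near $\s$, where both vector fields blow up: one isolates the sign of $\t'-\t'_{\mathrm{model}}$ in the region $\{x<\s(\t)\}$ (where $ax+b\sin\t<0$, so the $\p$-term lowers $\t'$), uses monotonicity of the $\t$-coordinate together with the explicit endpoint of the model orbit, and forces the contradiction along the lines of Claim~\ref{claim}; feeding the outcome into the foliation argument of the previous paragraph then completes the classification.
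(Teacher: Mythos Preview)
Your overall strategy---mirror Thm.~\ref{th:clasificacionespeciales}, redo the phase-plane bookkeeping with $\s$ relocated to $\Theta_1$, and reuse the Claim~\ref{claim} comparison with the integrable model $\phi\equiv 0$---is exactly the paper's approach, and your treatment of the complete examples and of the Claim analogue in $\{x<\s(\t)\}\subset\Theta_1$ is sound.

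The gap is in your handling of $\Theta_2$. You assert that an orbit issuing from $(0,\t_1)$ with $\t_1\in(3\pi/2,2\pi)$ ``lives in the $\s$-free region $\Theta_2$'' and ``must close at $(0,3\pi-\t_1)$'' there, with $K$ of fixed sign. But for $a>0,\ b<0$ and $\t\in(\pi,2\pi)$ one has $\sin\t<0$, so both the numerator $x\phi-a\sin\t$ and the denominator $ax+b\sin\t$ of $\t'$ in \eqref{eq:planofases} are strictly positive; hence $\t'>0$ \emph{throughout} $\Theta_2$. The orbit therefore cannot reach $\t=3\pi/2$ from $\t_1>3\pi/2$: it exits $\Theta_2$ through $\t=2\pi\equiv 0$, traverses the outer region of $\Theta_1$ (between the sphere orbit and the nodoids), re-enters $\Theta_2$ through $\t=\pi$, and only then lands on the axis at $(0,3\pi-\t_1)$. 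This is precisely the paper's construction of the type~$\mathit{1.7}$ examples (two cusp points, \emph{non}-monotonous height), not a self-contained $\Theta_2$ orbit. Note also that along the $\Theta_2$ arcs one has $K=\t'\sin\t/x<0$, so the argument ``$\Theta_2$ contains no branch of $\Gamma$, hence $K$ does not vanish'' does not yield $K>0$.

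A secondary inaccuracy: an orbit emanating from $(0,\t_0)$ with $\t_0\in(0,\pi/2]$ and converging to a point of $\s$ would give a surface with one cusp and one singular parallel, which is a singular disk, not an annulus. The annular types $\mathit{1.6}$ and $\mathit{1.8}$ in the paper both come from orbits with \emph{two} endpoints on $\s$: type $\mathit{1.6}$ from orbits inside $\{x<\s(\t)\}$ not reaching the axis (obtained by the foliation argument once the Claim analogue pins down the type~$\mathit{1.5}$ orbits), and type $\mathit{1.8}$ from orbits in the region bounded by $\s$ and the sphere orbit $\gamma_+$, which cross $\Gamma$ twice. Once you correct the $\Theta_2$ picture, the rest of your foliation argument goes through and matches the paper's classification.
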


\begin{proof}
We define $\phi$ in terms of $\Phi$ by Eqn. \eqref{condicionrotsim} as usual. In this case, the structure of the phase plane is as shown in Fig. \ref{fig:orbitascompletascaso2}: both curves $\s$ and $\Gamma$ lie entirely in $\Theta_1$ and are compact arcs joining the points $(0,0)$ and $(0,\pi)$. 

\begin{figure}[h]
\centering
\includegraphics[width=.45\textwidth]{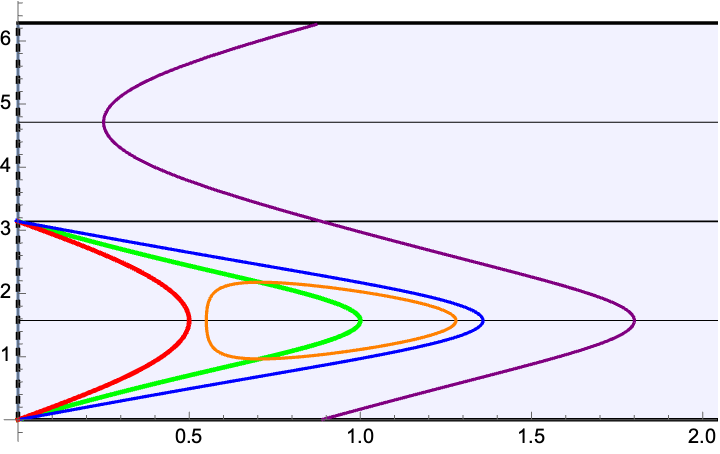}
\caption{The structure of the phase plane in the elliptic case for $a>0,b<0$, and the orbits corresponding to the complete rotational $\Phi$-surfaces.}
\label{fig:orbitascompletascaso2}
\end{figure}

The existence of the complete examples carries over verbatim as in the case $a>0,b>0$ exhibited in Thm.\ref{th:clasificacionespeciales}, and the orbits of these complete examples are shown in Fig. \ref{fig:orbitascompletascaso2}. The description of the surfaces of type $\mathit{1.5}$ and $\mathit{1.6}$ is also the same as in the case $a>0,b>0$; see Fig. \ref{fig:orbitasnocompletascaso2}, the orbits and profile curves in black and brown. Details are skipped at this point.

\begin{figure}[h]
\centering
\includegraphics[width=.5\textwidth]{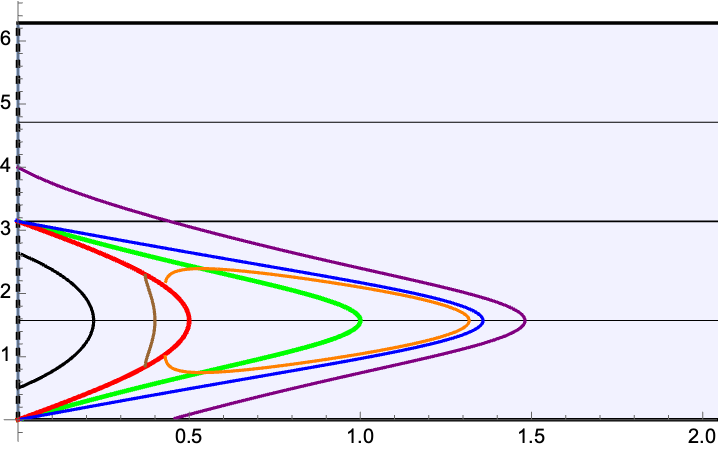}\hspace{1cm}\includegraphics[width=.17\textwidth]{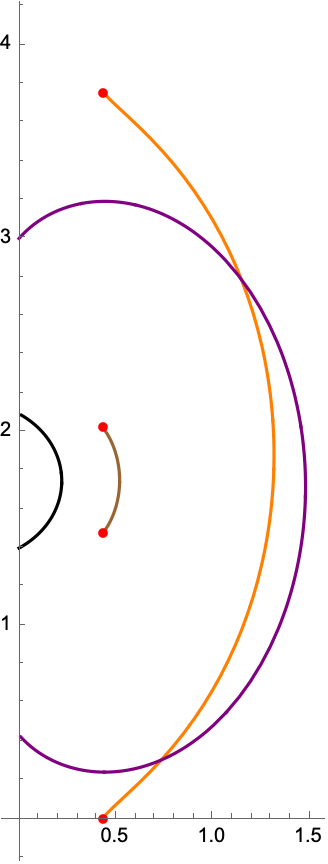}
\caption{Left: the orbits corresponding to the rotational $\Phi$-surfaces of type $\mathit{1.5-1.8}$ of Th. \ref{th:clasificacionelipticas2}. Right: the profile curve of the $\Phi$-surface corresponding to each orbit.}
\label{fig:orbitasnocompletascaso2}
\end{figure}

For the existence of the examples of type $\mathit{1.7}$, fix some $\t_0\in(\pi,3\pi/2]$ and let $\gamma_{\t_0}$ be the orbit having $(0,\t_0)$ as endpoint at the instant $s=0$. By symmetry of $\Theta_2$, $\gamma_{\t_0}$ has the point $3\pi-\t_0$ as endpoint, and ends up at some $(x_1,2\pi)$. For $s<0$, $\gamma_{\t_0}$ intersects the line $x=\pi$ at $(x_1,\pi)$ and then enters the region $\Theta_1$. Since $\gamma_{\t_0}$ cannot intersect the orbit corresponding to the sphere, nor any orbit of the nodoids, $\gamma_{\t_0}$ intersects the line $x=\pi/2$. By symmetry, $\gamma_{\t_0}$ intersects the line $x=0$ at $(x_1,0)$. See Fig \ref{fig:orbitasnocompletascaso2}, the orbit and profile curve in purple.

The $\Phi$-surface generated by this orbit intersects the axis of rotation at two cusp points, its height function is non-monotonous, and its Gauss curvature is everywhere positive. This proves the existence of the rotational examples of type $\mathit{1.7}$.

Finally, we prove the existence of the examples of type $\mathit{1.8}$. Name $\mathcal{W}_0$ to the inner region bounded by the orbit $\gamma_0$ of the rotational $\Phi$-sphere and the curve $\s$ of singular points. Fix some $(x_0,\t_0)\in\mathcal{W}_0-\Gamma$ and let $\gamma$ be the orbit passing through such point. Assume moreover that $\gamma$ does not correspond to an unduloid. It is clear that the only possibility for $\gamma$ is to be a bi-graph over the line $x=\pi/2$, intersecting it at a single point, having two endpoints located at the curve $\s$ and intersecting the curve $\Gamma$ twice. See Fig \ref{fig:orbitasnocompletascaso2}, the orbit and profile curve in orange.

The $\Phi$-surface generated by this orbit has the topology of an annulus, has strictly increasing height function (since $\gamma$ is strictly contained in $\Theta_1$) and its Gauss curvature changes sign. This exhibits the existence of the rotational examples of type $\mathit{1.8}$ and proves Thm.\ref{th:clasificacionelipticas2}.
\end{proof}


\begin{obs}
Most of the surfaces obtained in Thms. \ref{th:clasificacionespeciales} and \ref{th:clasificacionelipticas2} already appeared in \cite{RoSa,SaTo1}; for example, the complete Delaunay-type ones. Regarding the non-complete:
\begin{itemize}
\item Surfaces of type $\mathit{1.5,1.6}$ were described in Section IV in \cite{RoSa} and Thm.4, (1) and (3), in \cite{SaTo1}.
\item Surfaces of type $\mathit{1.8}$ of Thm.\ref{th:clasificacionespeciales} were described in Section IV in \cite{RoSa}, but only when the intersection with the rotation axis is tangential. For a transverse intersection, these surfaces were missing.
\item Surfaces of type $\mathit{1.8}$ of Thm.\ref{th:clasificacionelipticas2} were described in Thm.3 in \cite{SaTo1}.
\item Surfaces of type $\mathit{1.7}$ of Thms. \ref{th:clasificacionespeciales} and \ref{th:clasificacionelipticas2} do not appear in neither \cite{RoSa} nor \cite{SaTo1}. In \cite{SaTo1}, (1), it seems to appear a portion of these surfaces, the one being a concave graph onto the rotation axis, but none is said about if it intersects the rotation axis (type $\mathit{1.7}$ of Thm.\ref{th:clasificacionelipticas2}) or if it stays at a positive distance of it (type $\mathit{1.7}$ of Thm.\ref{th:clasificacionespeciales}).
\end{itemize}
Therefore, Thms. \ref{th:clasificacionespeciales} and \ref{th:clasificacionelipticas2} provide a complete classification of the rotational linear Weingarten surfaces of elliptic type, with an explicit behavior of the profile curve of each of their examples.
\end{obs}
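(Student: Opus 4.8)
The plan is to establish the Observation by a case-by-case cross-referencing, since completeness of the classification itself is already guaranteed by Thms. \ref{th:clasificacionespeciales} and \ref{th:clasificacionelipticas2} (the orbits of \eqref{eq:planofases} foliate $\Theta$); the content here is to match each type against the prior literature and pinpoint the genuine gaps. Each of the eight types is characterized by a small packet of invariants—the sign of $K$ (constant positive or changing sign), the topology (simply connected with cusp points at the axis versus annular with singular boundary parallels), the monotonicity of the height function, and the way the profile meets the rotation axis (tangentially, i.e. with $\t=\pi/2$ so that $x'=\cos\t=0$, versus transversally, $\t\neq\pi/2$). First I would translate the parametrizations of \cite{RoSa,SaTo1} into the present phase-plane language, identifying each of their families with a concrete orbit of \eqref{eq:planofases}; the invariants above are then read off directly from the orbit's position relative to the curves $\Gamma$, $\s$ and the lines $\t=\pi/2,3\pi/2$.

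For the complete examples (types $\mathit{1.1}$–$\mathit{1.4}$) I would note that the Delaunay pattern coincides orbit-by-orbit with the constant mean curvature case and with the families already listed in \cite{RoSa,SaTo1}, so no gap arises. The substantive work is on the non-complete types. For types $\mathit{1.5}$ and $\mathit{1.6}$ I would locate the matching surfaces in Section IV of \cite{RoSa} and in items (1) and (3) of Thm.~4 of \cite{SaTo1}, checking that the invariants agree. For type $\mathit{1.8}$ I would split according to the theorem: the annular, $K$-changing-sign surfaces of Thm. \ref{th:clasificacionelipticas2} appear in Thm.~3 of \cite{SaTo1}, while the cusped ones of Thm. \ref{th:clasificacionespeciales} (the orbits $\gamma_{\t_0}$, $\t_0\in(0,\pi/2]$) appear in Section IV of \cite{RoSa} only when $\t_0=\pi/2$, i.e. the profile meets the axis tangentially; the key point is that the transverse case $\t_0\in(0,\pi/2)$, where the orbit reaches $(0,\t_0)$ at a nonzero angle, is not covered and was therefore omitted.

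The crux, and the step I expect to be the main obstacle, is the type-$\mathit{1.7}$ claim that these surfaces appear in neither reference. Here I would argue that the orbits generating type $\mathit{1.7}$ traverse both monotonicity regions of $\Theta$ and produce a profile curve that, over part of its domain, is a concave graph onto the rotation axis—precisely the ``portion'' that seems to surface in item (1) of \cite{SaTo1}. The delicate task is to show that what is described there is only this partial graph, with no statement about the global completion of the orbit: in the $a>0,b>0$ setting the orbit closes at two endpoints on $\s$ and the surface stays at positive distance from the axis (annular, type $\mathit{1.7}$ of Thm. \ref{th:clasificacionespeciales}), whereas in the $a>0,b<0$ setting the orbit closes at two cusp endpoints on the axis (type $\mathit{1.7}$ of Thm. \ref{th:clasificacionelipticas2}). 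Distinguishing these two global behaviours, and verifying that \cite{SaTo1} commits to neither endpoint behaviour, is where the comparison is most subtle, since it requires reading the older result at the level of which limiting configuration it actually establishes. Assembling these matchings and gaps, every type is accounted for, which yields the concluding assertion that Thms. \ref{th:clasificacionespeciales} and \ref{th:clasificacionelipticas2} provide a complete classification of the rotational linear Weingarten surfaces of elliptic type with explicit profile curves.
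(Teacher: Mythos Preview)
This Observation carries no separate proof in the paper: it is a self-contained remark that simply records, item by item, which of the surfaces in Thms.~\ref{th:clasificacionespeciales} and \ref{th:clasificacionelipticas2} were already present in \cite{RoSa,SaTo1} and which were missing. Your proposal is not wrong, but it over-engineers the task: you outline a programme of translating the parametrizations of \cite{RoSa,SaTo1} into the phase-plane language and then matching invariants, whereas the paper treats the Observation as a literature cross-reference whose content is exactly the four bullet points as stated, with the completeness of the classification already secured by the proofs of the two theorems. In short, your approach and the paper's agree in substance; the difference is only that the paper presents the Observation as a bare comparison with no further argument, while you sketch how one would carry out that comparison in detail.
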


\section{The hyperbolic case}\label{sec4}
In this section we classify rotational $\Phi$-surfaces of hyperbolic type in the sense of Def. \ref{tipoEDP}, i.e. the constants $a,b$ and the prescribed function $\Phi\in C^1(\S^2)$ related by Eqn. \eqref{eq:defiPLW} satisfy $a^2+b\Phi<0$. Note that necessarily both $b,\Phi\neq 0$, hence we may assume $b=1$ and Eqn. \eqref{eq:defiPLW} is
$$
2aH+K=\Phi.
$$
Therefore, the surface is of hyperbolic type if and only if $\Phi<-a^2$.

Our classification result covers the particular but important case when $\Phi=c\in\R$, corresponding to hyperbolic linear Weingarten surfaces. Although the classification of these rotational surfaces was exhibited by López in \cite{Lop}, there exists a mistake. He described all the rotational linear Weingarten surfaces of hyperbolic type in terms of a parameter $x_0>0$, $x_0\neq 1/a$, for which he claimed that when $x_0>1/a$, all the corresponding surfaces are complete. This is actually false, as we will explain in the discussions of Item $\mathit{4}$ of the following result; see Obs. \ref{observation} for further details.

\begin{teo}\label{th:clasificacionhiperbolicas}
Let be $a\in\R$, $\Phi\in C^1(\S^2)$, $\Phi$ even, such that $\Phi<-a^2$, and $\phi$ the 1-dimensional function defined by \eqref{condicionrotsim} in terms of $\Phi$. Then, rotational $\Phi$-surfaces are described by a parameter $x_0>0,\ x_0\neq 1/a$, such that:
\begin{enumerate}
\item If $x_0<-a/\p(0)$, the surface is non-complete, of positive Gauss curvature and intersects the rotation axis at two cusp points;
\item if $x_0=-a/\p(0)$, the surface is the vertical right cylinder of radius $x_0$;
\item if $x_0\in(-a/\p(0),1/a)$, the surface is non-complete and of negative Gauss curvature; and
\item if $x_0>1/a$, depending on the prescribed function, two possibilities may occur:
\begin{itemize}
\item[4.1.] If $\phi\leq-2a^2$, for every $x_0>1/a$, the surface is complete and periodic, with a nodoidal behavior; or
\item[4.2.] if $\phi>-2a^2$, there exists $x_\infty>1/a$ such that:
\begin{itemize}
\item[4.2.1.] if $x_0\leq x_\infty$, then the surface is homemorphic to a sphere with two cusp points at the axis of rotation, and in particular is non-complete; and
\item[4.2.2.] if $x_0>x_\infty$, then the surface is complete and periodic, with a nodoidal behavior.
\end{itemize}
\end{itemize}
\end{enumerate}
\end{teo}

\begin{proof}
As usual, let $\phi$ be the 1-dimensional function defined by Eqn. \eqref{condicionrotsim} in terms of $\Phi$. After a change of the orientation if necessary, we assume $a>0$. Since $a>0$ both curves $\Gamma$ and $\s$ only exist for $\t\in[\pi,2\pi]$, i.e. they lie in $\Theta_2$. Also, the condition $\p<-a^2$ yields $\Gamma(\t)<\s(\t)$ for every $\t\in(\pi,2\pi)$.

First, take some $x_0\in(0,-a/\p(0))$ and consider $\gamma_{x_0}(s)$ the orbit passing through $(x_0,3\pi/2)$ at $s=0$. For $s>0$, $\g0$ lies in $\{x<\Gamma(\t),\ \t\in(\pi,3\pi/2)\}$ and stays there until converging to some $(0,\t_0)$ with $\t_0\in(\pi,3\pi/2)$. Since $\p$ is even, $\g0$ is symmetric with respect to the line $\t=3\pi/2$ and in particular has the point $(0,3\pi-\t_0)$ as endpoint. Therefore, the corresponding $\Phi$-surface has two cusp points at the axis of rotation and hence is non-complete. Moreover, since $\t'<0$ and $\sin\t<0$, from Eqn. \eqref{eq:curvprin} we conclude that its Gauss curvature is positive everywhere.

\begin{figure}[h]
\centering
\hspace{-.5cm}\includegraphics[width=.6\textwidth]{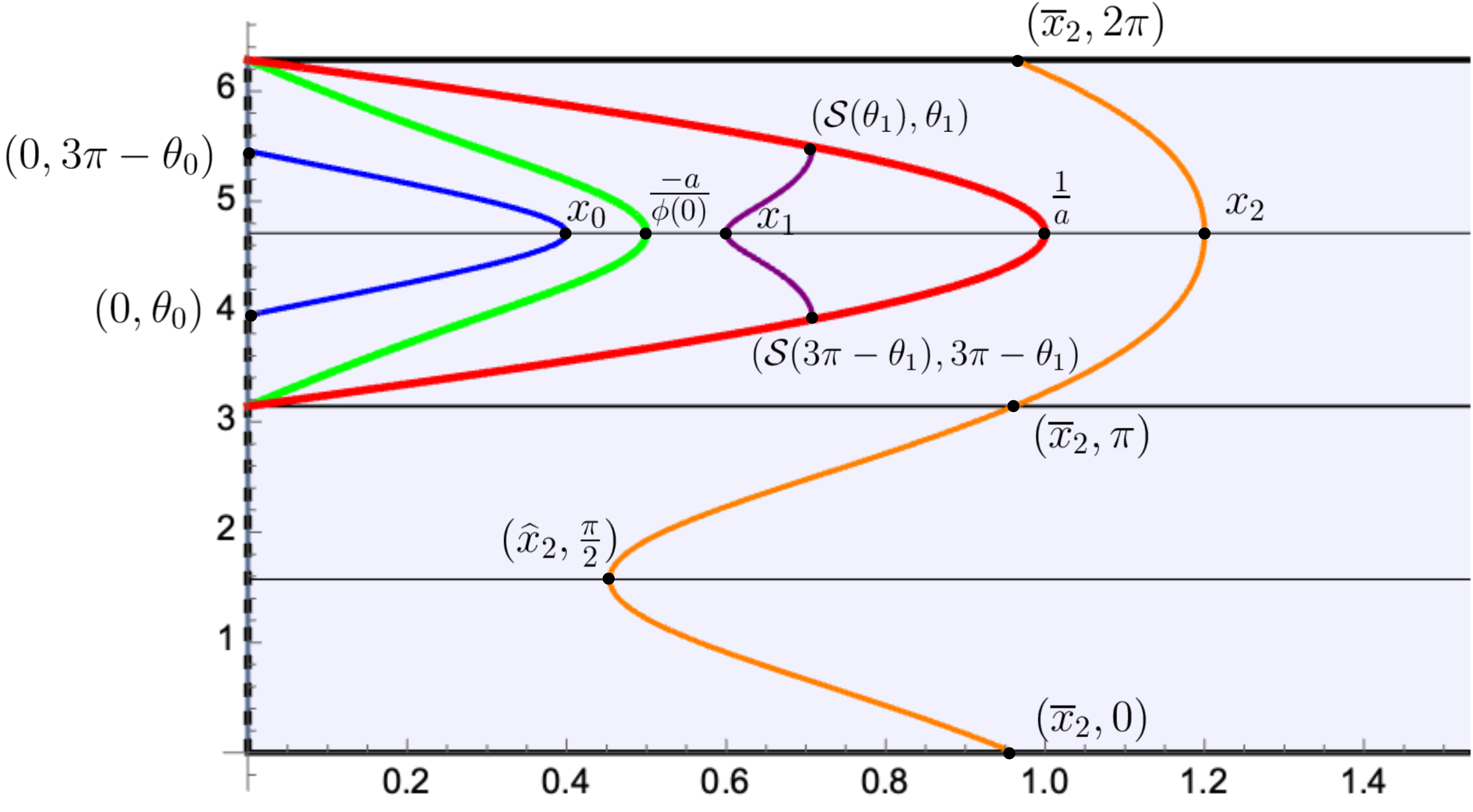}\hspace{1cm}\includegraphics[width=.25\textwidth]{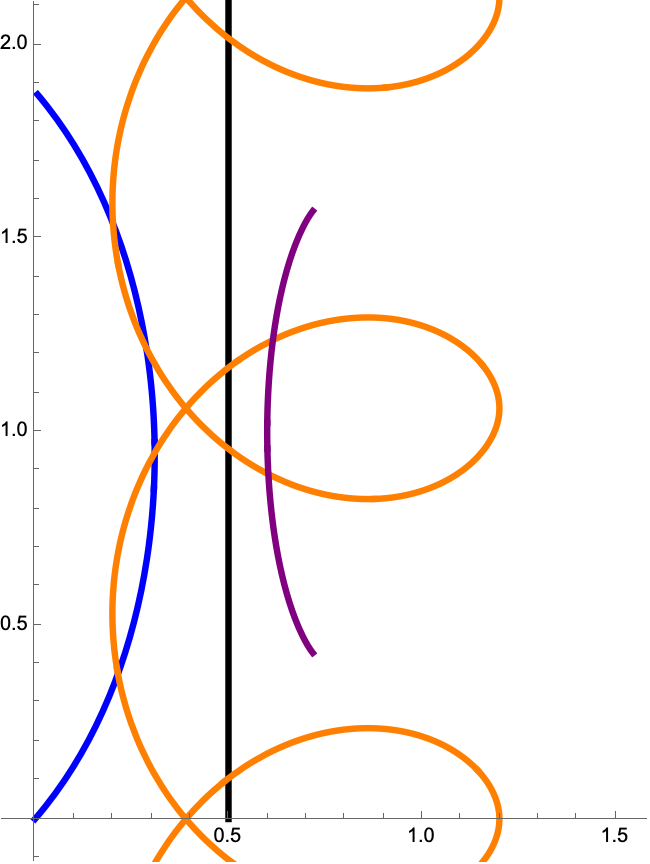}
\caption{Left: The phase plane for $b=1$ and $\Phi<-a^2$, and the orbits appearing in the statement of Th. \ref{th:clasificacionhiperbolicas}. Right: the profile curve corresponding to such orbits.}
\label{fig:hiperbolicas}
\end{figure}

Now consider the equilibrium point $e_0=(-a/\p(0),3\pi/2)$. This equilibrium generates a vertical, right circular cylinder of radius $-a/\p(0)$ and constant mean curvature $\p(0)/(2a)$\footnote{Recall that this cylinder is parametrized with strictly decreasing height function, hence the unit normal induced by this parametrization is the outwards one and the mean curvature is negative.}.

Next, fix some $x_1\in(-a/\p(0),1/a)$ and consider the orbit $\gamma_{x_1}$ passing through $(x_1,3\pi/2)$ at $s=0$. For $s>0$, $\gamma_{x_1}$ lies in the region $\{x\in(\Gamma(\t),\s(\t)),\ \t\in(3\pi/2,2\pi)\}$ and stays there until converging to some $(\s(\t_1),\t_1)$. By symmetry, $\gamma_{x_1}$ also converges to $(\s(3\pi-\t_1),3\pi-\t_1)$ for $s<0$. In conclusion, the corresponding $\Phi$-surface is non-complete as it converges to two circles of singular points. Moreover, this time $\t'>0$ and $\sin\t<0$, and so by \eqref{eq:curvprin} we conclude that its Gauss curvature is negative everywhere.

Note that no orbit can pass through the point $(1/a,3\pi/2)$, since it correspond to a singularity of Eqn. \eqref{eq:planofases}, but it may converge to it in the same fashion as the orbits $\gamma_{x_1}$ converge to points in $\s$. 

In order to finally describe the orbits passing through some $(x_0,3\pi/2)$, with $x_0>1/a$, we study deeply the particular case $\Phi=c\in\R,\ c<-a^2$. Consider the orbit 
\begin{equation}\label{orbitac}
\gamma_c(s)=(x_c(s),\t_c(s)),\hspace{.5cm} x_c(0)=0,\ \t_c(0)=\pi/2.
\end{equation}
In virtue of Eqn. \eqref{integralprimera}, a first integral is given by
\begin{equation}\label{integralprimerahiperbolica}
ax(s)\sin\t(s)+\frac{1}{2}(\sin^2\t(s)-1)=\frac{c}{2}x(s)^2.
\end{equation}
We study three different possibilities for the value $c$.

\underline{\textbf{Case $c=-2a^2$.}} 

Let $\gamma_{-2a^2}$ be the orbit given by \eqref{orbitac} for $c=-2a^2$. By monotonicity, $\gamma_{-2a^2}$ intersects the line $\t=\pi$ at some $(\overline{x},\pi)$ and then either converges to $\s$ or intersects the line $\t=3\pi/2$. We discuss both cases.
\begin{itemize}
\item If $\gamma_{-2a^2}$ converges to $\t=3\pi/2$, then $\t_{-2a^2}(s)\rightarrow 3\pi/2$ and $x_{-2a^2}(s)\rightarrow 1/a$ by just substituting in \eqref{integralprimerahiperbolica}. In particular, $\gamma_{-2a^2}$ never intersects $\t=3\pi/2$. 
\item If $\gamma_{-2a^2}$ converges to $\s$, then $\t_{-2a^2}(s)\rightarrow\t_0$ and $x_{-2a^2}(s)\rightarrow\s(\t_0)$, for some $\t_0\in(\pi,3\pi/2]$. Since $\s(\t_0)=-1/a\sin\t_0$, a straightforward substitution in \eqref{integralprimerahiperbolica} yields $\t_0=3\pi/2$ and so $x_{-2a^2}(s)\rightarrow 1/a$, that is $\gamma_{-2a^2}$ converges to the point $(1/a,3\pi/2)$.
\end{itemize}
Since by monotonicity $\gamma_{-2a^2}$ must either converge to $\s$ or $\t=3\pi/2$, we conclude that $\gamma_{-2a^2}$ actually converges to $(1/a,3\pi/2)$, without intersecting it. By symmetry of $\Theta_1$, the orbit $\gamma_{-2a^2}$ ends up at the point $(\overline{x},0)$. Also, by symmetry of $\Theta_2$, there exists an orbit, called again $\gamma_{-2a^2}$ in analogy, with one end converging to $(1/a,3\pi/2)$ and the other end reaching the point $(\overline{x},2\pi)$. See Fig. \ref{fig:linealesW}, the orbit in orange. In particular, the $\Phi$-surface generated by this orbit is non-complete, since it converges to a parallel of singular points and intersects the axis of rotation at cusp points.

\begin{figure}[h]
\centering
\includegraphics[width=.5\textwidth]{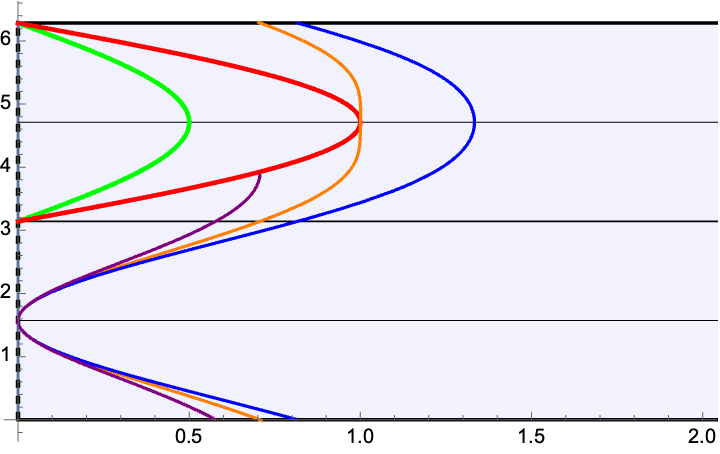}
\caption{The phase plane for the case $b=1,\ \Phi=c\in\R,\ c<-a^2$, and the distinct orbits depending on the sign of $c+2a^2$.}
\label{fig:linealesW}
\end{figure}

Take some $x_0>1/a$ and consider $\g0$ the orbit passing through $(x_0,3\pi/2)$. The uniqueness of \eqref{eq:planofases} yields that $\g0$ and $\gamma_{-2a^2}$ cannot intersect, hence $\g0$ intersects the line $\t=\pi/2$ at some $(\widehat{x_0},\pi/2)$ with $\widehat{x_0}>0$. In particular, all the $\Phi$-surfaces generated by these orbits are complete, since none of them have singular points nor intersect the axis of rotation at cusp points. Moreover, they have a nodoidal behavior.

Now take some $\t_0\in(\pi/2,\pi)$ and consider $\gamma_{\t_0}$ the orbit passing through $(0,\t_0)$. Again by uniqueness $\gamma_{\t_0}$ cannot intersect $\gamma_{-2a^2}$, and so $\gamma_{\t_0}$ must converge to a point in $\s$. In particular, none of the $\Phi$-surfaces generated by these orbits is complete, since all of them converge to singular points and intersect the axis of rotation at cusp points.


\underline{\textbf{Case $c<-2a^2$.}}

Let $\gamma_c$ be the orbit given by \eqref{orbitac}. Suppose that $\gamma_c$ has some $(\overline{x},3\pi/2),\ \overline{x}>1/a,$ as finite endpoint. Substituting in \eqref{orbitac} we conclude that $\overline{x}=-2a/c$. Now, the condition $c<-2a^2$ ensures us that $-2a/c<1/a$, which contradicts the fact that $\overline{x}>1/a$. Hence, $\gamma_c$ must converge to $\s$. See Fig. \ref{fig:linealesW}, the orbit in purple. 

Take some $x_0>1/a$ and let $\g0(s)$ be the orbit passing through $(x_0,3\pi/2)$ at $s=0$. Since it cannot intersect $\gamma_c$, for $s>0$ it must reach the line $\t=\pi/2$ at some $(\widehat{x_0},\pi/2)$ with $\widehat{x_0}>0$. By the same discussion made for the case $c=-2a^2$, the $\Phi$-surface corresponding to $\g0$ is complete and has a nodoidal behavior.

Now take some $\t_0\in(\pi/2,\pi)$ and consider $\gamma_{\t_0}$ the orbit passing through $(0,\t_0)$. This orbit must end up converging to $\s$ without intersecting $\gamma_c$, and the corresponding $\Phi$-surface is non-complete.

\underline{\textbf{Case $c>-2a^2$.}}

Finally, let $\gamma_c$ be the orbit given by \eqref{orbitac} and assume $\gamma_c\rightarrow\s$. Hence, $\t_c\rightarrow\t_*$ and $x_c\rightarrow-1/a\sin\t_*$. Substituting in \eqref{integralprimerahiperbolica} we obtain
$$
-\frac{1}{2}\sin^2\t_*-\frac{1}{2}=\frac{c}{2a^2}\sin^2\t_*.
$$
Now, the condition $c>-2a^2$ yields
$$
-\frac{1}{2}\sin^2\t_*-\frac{1}{2}>-\sin^2\t_*,
$$
and so $1<\sin^2\t_*$, which is obviously a contradiction. Therefore, $\gamma_c$ must reach the line $\t=3\pi/2$ at $(x_c,3\pi/2)$, with $x_c=-2a/c>1/a$. See Fig. \ref{fig:linealesW}, the orbit in blue.

Now take some $x_0>1/a$ and let $\g0$ be the orbit passing through $(x_0,3\pi/2)$.
\begin{itemize}
\item If $x_0\leq x_c$, $\g0$ reaches the line $x=0$ at some $(0,\t_{x_0})$. The corresponding $\Phi$-surface is non-complete, since it intersects the axis of rotation at cusp points.
\item If $x_0>x_c$, then $\g0$ reaches the line $\t=\pi/2$ at some $\widehat{x_0}$ with $\widehat{x_0}>0$. By the same discussions made for the cases $c\leq-2a^2$, the corresponding $\Phi$-surface is complete and has a nodoidal behavior.
\end{itemize}
\hspace{.5cm}

Once we have described the orbits of the phase plane for the particular case when $\Phi=c\in\R$, we go back to the case where $\Phi<-a^2$, is an arbitrary function $\Phi\in C^1(\S^2)$. 
\begin{enumerate}
\item Suppose that $\Phi\leq-2a^2$. Let be $x_0>1/a$ and $\g0(s)=(x_{x_0}(s),\t_{x_0}(s))$ the orbit of \eqref{eq:planofases} for $\Phi$ passing through $(x_0,3\pi/2)$. We take $c=-2a^2$ and $\gamma_{-2a^2}(t)$ the orbit of \eqref{eq:planofases} for $\Phi=-2a^2$ passing through $(0,\pi/2)$, which we already proved that converges to the point $(1/a,3\pi/2)$. If $\g0(s_0)=\gamma_{-2a^2}(t_0)$, the same comparison argument used in the proof of Claim \ref{claim} yields
$$
\t_{x_0}'(s_0)\leq\t_{-2a^2}(t_0).
$$

First, assume that $\g0$ ends up intersecting the line $x=0$ at some $(0,\t_0)$ with $t_0\in(\pi/2,\pi)$. Then, by continuity $\g0$ and $\gamma_{-2a^2}$ would intersect transversely at a point where $\t_{x_0}'(s_0)<\t_{-2a^2}(t_0)$, which is impossible. Second, assume that $\g0$ ends up intersecting the point $(0,\pi/2)$. We take $x_1\in(1/a,x_0)$ and $\gamma_{x_1}$ the orbit passing through $(x_1,3\pi/2)$. This orbit cannot end up at $(0,\pi/2)$ by uniqueness, hence must intersect $x=0$ at some $(0,\t_1)$ with $\t_1\in(0,\pi)$. Again, $\gamma_{x_1}$ would intersect transversely $\gamma_{-2a^2}$, a contradiction.

Therefore, $\g0$ ends up intersecting the line $\t=\pi/2$ at some $(\widehat{x_0},\pi/2)$, hence $\g0$ generates a complete $\Phi$-surface that has no singularities and a nodoidal behavior.
\item Suppose that $\Phi>-2a^2$. Fix some $c$ such that $\Phi>c>-2a^2$ and let $\gamma_c(t)=(x_c(t),\t_c(t))$ be the orbit of \eqref{eq:planofases} for $\Phi=c$ passing through $(0,\pi/2)$, which we already proved that intersects $\t=3\pi/2$ at some $(x_c,3\pi/2),\ x_c>1/a$.

Let be $x_0>1/a$ and $\g0(s)=(x_{x_0}(s),\t{x_0}(s))$ the orbit passing through $(x_0,3\pi/2)$. This time, the comparison between the functions $\t'_{x_0}$ and $\t'_c$ at an intersection point between $\g0$ and $\gamma_c$ yields
$$
\t'_{x_0}(s_0)>\t'_c(t_0).
$$
Now take some $x_0<x_c$. The orbit $\g0$ must end up converging to the line $x=0$ at some $(0,\t_0),\ \t_0\in(\pi/2,\pi)$. Otherwise, it would intersect $\gamma_c$ transversely, a contradiction. Note that it $\g0$ cannot converge to the point $(0,\pi/2)$, since we would take $x_1\in(x_0,x_c)$ and then $\gamma_{x_1}$ would intersect transversely $\gamma_c$.

In this fashion, the orbit $\gamma_\Phi$ that passes through $(0,\pi/2)$ lies at the right-hand side of $\gamma_c$ and so intersects $\t=3\pi/2$ at some $(x_\infty,3\pi/2)$, with $x_\infty>x_c>1/a$. Finally, we distinguish cases for $x_0$.
\begin{itemize}
\item If $x_0\leq x_\infty$, then $\g0$ must end up converging to the line $x=0$ at some $(0,\t_0),\ \t_0\in[\pi/2,\pi)$. This orbit generates a non-complete $\Phi$-surface.
\item If $x_0>x_\infty$, then $\g0$ intersects the line $\t=\pi/2$ at some $(\widehat{x_0},\pi/2),\ \widehat{x_0}>0$. This orbit generates a complete $\Phi$-surface with a nodoidal behavior.
\end{itemize}
\end{enumerate}

This completes the classification of rotational $\Phi$-surfaces of hyperbolic type.
\end{proof}

\begin{obs}\label{observation}
In \cite{Lop}, López considered the equivalent relation $aH+bK=1$, which is hyperbolic if and only if $a^2+4b<0$. He described the same four types of rotational surfaces in terms of a parameter $z_0\neq-2b/a$ varying as: 
\begin{itemize}
\item $0<z_0<a/2$ for the compact surfaces of positive Gauss curvature;
\item $z_0=a/2$ for the cylinder;
\item $a/2<z_0<-2b/a$ for the non-complete surfaces of negative Gauss curvature; and
\item $z_0>-2b/a$ for the complete surfaces of nodoidal-type.
\end{itemize}
It is easy to check that the value $a/2$ for the relation $aH+bK=1$ corresponds to $-a/\phi(0)$ in our case $2aH+K=\phi$, and $-2b/a$ corresponds to $1/a$. He claimed that for every $z_0>-2b/a$, the corresponding rotational surface was complete, which we already proved to be false.
\end{obs}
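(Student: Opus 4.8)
The plan is to treat the Observation as a two-part verification: first, to establish the dictionary between López's parametrization of \cite{Lop} and ours; and second, to pinpoint, via Thm.~\ref{th:clasificacionhiperbolicas}, exactly where his completeness assertion fails. The refutation itself is not new work; it is already contained in the hyperbolic classification, and the role of the Observation is only to make the translation explicit and to flag the offending regime.

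For the dictionary, I would write López's constants as $a_L,b_L$ and start from his relation $a_LH+b_LK=1$. Since hyperbolicity forces $b_L\neq0$, dividing by $b_L$ gives $(a_L/b_L)H+K=1/b_L$, which I compare with our normalized relation $2aH+K=\phi$ (where $b=1$). Matching the two constant equations forces $2a=a_L/b_L$ and, in the linear Weingarten case, $\phi\equiv c=1/b_L$; a one-line check then turns López's hyperbolicity condition $a_L^2+4b_L<0$ into our $a^2+c<0$. Under this identification the cylinder radius $-a/\phi(0)$ of Item~2 of Thm.~\ref{th:clasificacionhiperbolicas} equals $a_L/2$ in absolute value, i.e. his cylinder value $z_0=a/2$; and the singular abscissa $1/a$ of the point $(1/a,3\pi/2)$ equals $2b_L/a_L$ in absolute value, i.e. his separator $z_0=-2b/a$. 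The only care needed is the overall sign, which is absorbed by the orientation reversal discussed at the start of Section~\ref{sec4} (replacing $N$ by $-N$ flips $H$, hence the sign of $a$); this is precisely the computation the Observation labels \emph{easy to check}.

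For the refutation I would restrict to the constant case and observe that the hyperbolicity $c<-a^2$ together with the regime separator $c=-2a^2$ of the theorem leaves the nonempty window $-2a^2<c<-a^2$ available; in López's variables this is exactly $-a_L^2/2<b_L<-a_L^2/4$, a genuine sub-range of his hyperbolic regime $b_L<-a_L^2/4$. For any such $c$ we are in the case $c>-2a^2$ of the proof of Thm.~\ref{th:clasificacionhiperbolicas}: the orbit $\gamma_c$ of \eqref{orbitac} meets $\t=3\pi/2$ at $(x_c,3\pi/2)$ with $x_c=-2a/c$, and one checks that $x_c>1/a$ is equivalent to precisely $c>-2a^2$. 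By the dichotomy established there, every orbit through $(x_0,3\pi/2)$ with $1/a<x_0\le x_c$ returns to the axis $x=0$ at two cusp points, so the corresponding surface is \emph{non-complete}. Translating back through the dictionary, these are surfaces with López's parameter $z_0>-2b/a$ that fail to be complete, which is the promised contradiction.

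The main obstacle is bookkeeping rather than ideas: reconciling the sign and orientation conventions so that the special values line up, and, more substantively, confirming that López's fixed constant $1/b_L$ can actually land in the window $(-2a^2,-a^2)$ for parameters satisfying his own hyperbolicity $a_L^2+4b_L<0$. Once the sub-range $-a_L^2/2<b_L<-a_L^2/4$ is seen to be nonempty, the existence of non-complete surfaces with $x_0\in(1/a,x_c]$ is already furnished by Thm.~\ref{th:clasificacionhiperbolicas} (this is the instance $\mathit{4.2.1}$ for constant $\phi$), and nothing beyond the change of variables remains to be done.
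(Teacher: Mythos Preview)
Your proposal is correct and follows essentially the same line as the paper: the Observation carries no independent proof there, only the assertion that the correspondence of parameters is ``easy to check'' together with a back-reference to the case $\mathit{4.2.1}$ of Thm.~\ref{th:clasificacionhiperbolicas}, and you have simply fleshed out both of these (the dictionary $2a=a_L/b_L$, $c=1/b_L$ with the orientation flip, and the identification of the nonempty window $-2a^2<c<-a^2$ with $-a_L^2/2<b_L<-a_L^2/4$). There is nothing to add beyond noting that the paper does not even write down the explicit formula $x_c=-2a/c$ in the Observation itself---it relies on its appearance inside the proof of the theorem---whereas you make the inequality $x_c>1/a\Leftrightarrow c>-2a^2$ explicit.
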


\def\refname{References}

The first author was partially supported by P18-FR-4049. For the second author, this research is a result of the activity developed within the framework of the Programme in Support of Excellence Groups of the Región de Murcia, Spain, by Fundación Séneca, Science and Technology Agency of the Región de Murcia. Irene Ortiz was partially supported by MICINN/FEDER project PGC2018-097046-B-I00 and Fundación Séneca project 19901/GERM/15, Spain.


\begin{thebibliography}{9}
\vspace{-.5cm}


\bibitem[1]{Ale} A.D. Alexandrov, Uniqueness theorems for surfaces in the large, I,  \emph{Vestnik Leningrad Univ.} \textbf{11} (1956), 5--17. (English translation): Amer. Math. Soc. Transl. \textbf{21} (1962), 341--354.

\bibitem[2]{Bue1} A. Bueno, A Delaunay-type classification result for prescribed mean curvature surfaces in $\mathbb{M}^2(\kappa)\times\R$, \emph{Pacific J. Math.} \textbf{313} (2021), 45--74.

\bibitem[3]{Bue2} A. Bueno, Delaunay surfaces of prescribed mean curvature in Berger spheres, \emph{J. Geom. Phys.} \textbf{171} (2022), 104412.

\bibitem[4]{Bue3} A. Bueno, Delaunay surfaces of prescribed mean curvature in $\mathrm{Nil}_3$ and $\widetilde{Sl}_2(\R)$, preprint, arXiv:2003.13018.

\bibitem[5]{BuLo} A. Bueno, R. López, Radial solutions for equations of Weingarten type, preprint, arXiv:2201.06474.

\bibitem[6]{BuOr} A. Bueno, I. Ortiz, Rotational surfaces of prescribed Gauss curvature in $\mathbb{R}^3$, preprint, arXiv:2201.07057.

\bibitem[7]{BGM1} A. Bueno, J.A. Gálvez, P. Mira, Rotational hypersurfaces of prescribed mean curvature, \emph{J. Differential Equations} \textbf{268} (2020), 2394--2413.

\bibitem[8]{BGM2} A. Bueno, J.A. Gálvez, P. Mira, The global geometry of surfaces with prescribed mean curvature in $\R^3$, \emph{Trans. Amer. Math. Soc.} \textbf{373} (2020), 4437-4467.

\bibitem[9]{Che} S.S. Chern (1955) On special $W$-surfaces. \emph{Proc. Amer. Math. Soc.} \textbf{6} (1955) 783--786.

\bibitem[10]{Chr} E.B. Christoffel, Über die Bestimmung der Gestalt einer krummen Oberfläche durch lokale Messungen auf derselben. \emph{J. Reine Angew. Math.} \textbf{64}, (1865), 193--209.

\bibitem[11]{CSS} J. Clutterbuck, O. Schnurer, F. Schulze, Stability of translating solutions to mean curvature flow, \emph{Calc. Var. Partial Diff. Equations} \textbf{29} (2007), no. 3, 281--293.

\bibitem[12]{GMM} J.A. Gálvez, A. Martínez, F. Milán Linear Weingarten Surfaces in $\R^3$, \emph{Monatsh. Math.} \textbf{138} (2003), 133--144.

\bibitem[13]{GaMi1} J.A. Gálvez, P. Mira, A Hopf theorem for non-constant mean curvature and a conjecture of A.D. Alexandrov, \emph{Math. Ann.} \textbf{366} (2016), 909--928.

\bibitem[14]{GaMi2} J.A. Gálvez, P. Mira, Uniqueness of immersed spheres in three-manifolds, \emph{J. Differential Geometry} \textbf{116} (2020), 459--480.



\bibitem[15]{GuGu} B. Guan, P. Guan, Convex hypersurfaces of prescribed curvatures, \emph{Ann. Math.} \textbf{156} (2002), 655--673.

\bibitem[16]{HaWi} P. Hartman, A. Wintner, On the third fundamental form of a surface, \emph{Amer. J. Math.} \textbf{75} (1953), 298--334.

\bibitem[17]{HaWi2} P. Hartman and W. Wintner, Umbilical points and $W$-surfaces, \emph{Amer. J. Math.} \textbf{76} (1954) 502--508.

\bibitem[18]{Hop} H. Hopf, Differential Geometry in the Large, \emph{Lecture Notes in Math}, Berlin: Springer-Verlag \textbf{1000} (1982).

\bibitem[19]{HuSi} G. Huisken, C. Sinestrari, Convexity estimates for mean curvature flow and singularities of mean convex surfaces, \emph{Acta Math.}, \textbf{183} (1993), no. 1, 45--70.

\bibitem[20]{Ilm} T. Ilmanen, Elliptic regularization and partial regularity for motion by mean curvature, \emph{Mem. Amer. Math. Soc.} \textbf{108} (1994).

\bibitem[21]{Lop} R. López, Rotational linear Weingarten surfaces of hyperbolic type, \emph{Israel J. Math.} \textbf{167} (2008), 283--302.

\bibitem[22]{MSHS} F. Martín, A. Savas-Halilaj, K. Smoczyk, On the topology of translating solitons of the mean curvature flow, \emph{Calc. Var. Partial Diff. Equations} \textbf{54} (2015), 2853--2882.

\bibitem[23]{Min} H. Minkowski, Volumen und Oberfläche, \emph{Math. Ann.} \textbf{57} (1903), 447--495.

\bibitem[24]{Pog} A.V. Pogorelov, Extension of a general uniqueness theorem of A.D. Aleksandrov to the case of nonanalytic surfaces (in Russian), \emph{Doklady Akad. Nauk SSSR} \textbf{62} (1948), 297--299.

\bibitem[25]{RoSa} H. Rosenberg, R. Sa Earp, The geometry of properly embedded special surfaces in $\R^3$; e.g., surfaces satisfying $aH+bK=1$, where $a$ and $b$ are positive, \emph{Duke Math.J.} \textbf{73} (1994), 291--306.

\bibitem[26]{SaTo1} R. Sa Earp, E. Toubiana, Surfaces de Weingarten speciales de type minimal, \emph{Bol. Soc. Brasil. Mat.} \textbf{26} (1995), 129--148.

\bibitem[27]{SaTo2} R. Sa Earp, E. Toubiana, Classification des surfaces de type Delaunay, \emph{Amer. J. Math.} \textbf{121} (1999), 671--700.
\end{thebibliography}
\end{document}